\documentclass[11pt,reqno]{amsart}
\usepackage[american]{babel}
\usepackage{amsmath}
\usepackage{dsfont,mathtools,amssymb}
\usepackage{tikz}
\usepackage{subfigure}

\usepackage{color}
\usepackage{mathrsfs}
\mathtoolsset{showonlyrefs,showmanualtags}

\definecolor{darkblue}{rgb}{0.1,0.1,0.7}

\definecolor{darkred}{rgb}{0.7,0.1,0.1}

\addtolength{\oddsidemargin}{-.375in}
	\addtolength{\evensidemargin}{-.375in}
	\addtolength{\textwidth}{0.85in}

	\addtolength{\topmargin}{-.375in}
	\addtolength{\textheight}{0.85in}

\newtheorem{theorem}{Theorem}[section]
\newtheorem{proposition}[theorem]{Proposition}
\newtheorem{lemma}[theorem]{Lemma}
\newtheorem{corollary}[theorem]{Corollary}
\newtheorem{remark}[theorem]{Remark}
\newtheorem{definition}[theorem]{Definition}

\numberwithin{equation}{section}
\numberwithin{figure}{section}

\newcommand{\sumtwo}[2]{\sum_{\substack{#1 \\ #2}}} 
\newcommand{\scalar}[2]{\langle #1 , #2\rangle}

\newcommand{\ind}{\mathbf{1}}

\newcommand{\e}{\varepsilon}

\renewcommand{\rho}{\varrho}
\renewcommand{\phi}{\varphi}

\newcommand{\tmix}{T_{\rm mix}}

\DeclareMathOperator{\var}{Var}
\DeclareMathOperator{\cov}{Cov}


\newcommand{\be}{\begin{equation}}


\newcommand{\cC}{\ensuremath{\mathcal C}} 
 
\newcommand{\cE}{\ensuremath{\mathcal E}} 
\newcommand{\cF}{\ensuremath{\mathcal F}}


\newcommand{\bbF}{{\ensuremath{\mathbb F}} } 
\newcommand{\bbG}{{\ensuremath{\mathbb G}} }

\newcommand{\bbN}{{\ensuremath{\mathbb N}} }

\newcommand{\bbR}{{\ensuremath{\mathbb R}} }

\newcommand{\bbZ}{{\ensuremath{\mathbb Z}} }

\newcommand{\si}{\sigma} 
\newcommand{\ent}{{\rm Ent} } 

\newcommand{\scr}{\mathscr}

%
\let\a=\alpha \let\b=\beta   \let\d=\delta  \let\e=\varepsilon
 \let\g=\gamma       \let\l=\lambda
            
\let\r=\rho   \let\t=\tau   
  
\let\D=\Delta   \let\G=\Gamma  \let\L=\Lambda 
\let\O=\Omega

\def\({\left(}
\def\){\right)}
%


\title{Block factorization of the relative entropy via spatial mixing}
\author{Pietro Caputo}
\address{Department of Mathematics and Physics, Roma Tre University, Largo San Murialdo 1, 00146 Roma, Italy.}
\email{pietro.caputo@uniroma3.it}
\author{Daniel Parisi}
\address{Department of Mathematics and Physics, Roma Tre University, Largo San Murialdo 1, 00146 Roma, Italy.}
\email{daniel.parisi@uniroma3.it}

\begin{document}
\begin{abstract}
We consider spin systems in the $d$-dimensional lattice $\bbZ^d$ satisfying the so-called strong spatial mixing condition.  
We show that the relative entropy functional 
of the corresponding  Gibbs measure satisfies a family of inequalities which 
control the entropy on a given region $V\subset\bbZ^d$ in terms of a weighted sum of the entropies on blocks $A\subset V$ when each $A$ is given an arbitrary nonnegative weight $\a_A$.  
These inequalities generalize the well known logarithmic Sobolev inequality for the Glauber dynamics.
Moreover, they provide a natural extension of the classical Shearer inequality satisfied by the Shannon entropy. 
Finally, 
they imply a family of modified logarithmic Sobolev inequalities which give quantitative  control on the convergence to equilibrium of arbitrary weighted block dynamics of heat bath type. 
\end{abstract}
\keywords{Entropy, Logarithmic Sobolev inequalities, Gibbs measures, Block dynamics} 
\subjclass[2010]{82B20, 82C20, 39B62}
\maketitle
\thispagestyle{empty}

%

\section{Introduction}\label{sec:1}
Functional inequalities such as the Poincar\'e and the logarithmic Sobolev inequality have long played a key role in the analysis of convergence to equilibrium for spin systems.  
For the Glauber dynamics associated to a lattice Gibbs measures in the high temperature regime, rather conclusive results were 
obtained around thirty years ago in a series of influential papers 
\cite{HS,Zeg,StrZeg,StrZeg_ct,Lu-Yau,MO2}. Broadly speaking, the main results of these works can be summarized with the statement that for finite or compact spin space, if the spin system satisfies a spatial mixing condition, then the {\em relative entropy} functional of the Gibbs measure $\mu_V$ describing the system on any 
region $V\subset \bbZ^d$, satisfies an {\em approximate tensorization} of the form:
\begin{align}\label{ATC}
\ent_Vf\leq C\sum_{x\in V} \mu_V\!\left[\ent_xf\right],
\end{align}
where $C>0$ is a constant, $f$ is a nonnegative function, and 
$\ent_V f$, the relative entropy of the probability measure $\left(f/\mu_V f\right)\mu_V$
with respect to $\mu_V$,  
is given by 
$$\ent_Vf= \mu_V\left[f\log \left(f/\mu_V f\right)\right],$$
with $\ent_x$ denoting $\ent_{\{x\}}$, for any vertex $x\in V$. The key feature of this inequality is its dimensionless character, namely the fact that  the constant $C>0$ is independent of both the region $V$, and the boundary condition fixed in $\bbZ^d\setminus V$, which we have omitted from our notation for simplicity. The papers mentioned above formulate their results in terms of {\em logarithmic Sobolev inequalities}, but we find it natural to restate them in terms of the tensorization inequality \eqref{ATC}, which seems to have a more fundamental character in our setting. Anyhow, if the spin space is finite,
the statement \eqref{ATC} is 
equivalent to 
the standard logarithmic Sobolev inequality for the single site heat bath Markov chain, 
 see e.g.\ \cite{CMT,Marton2}.

The proof of these results 
was obtained through refined recursive techniques, which exploit the spatial mixing assumption to establish some form of factorization 
of the entropy functional.   
We refer to the surveys 
\cite{MarStFlour,GuiZeg} for systematic expositions of these techniques. 
A particularly simple and effective approach was later developed in \cite{Cesi} and \cite{PPP}, who
independently  showed that the spatial mixing condition implies a factorization estimate of the form
\begin{align}\label{FC1}
\ent_Vf\leq (1+\e)\,\mu_V\!\left[\ent_Af + \ent_B f\right]\,,
\end{align}
where  
$A,B$ are e.g.\ two overlapping rectangular regions in $\bbZ^d$, with $V=A\cup B$, and $\e>0$ is a constant that can be made small provided the overlap between $A$ and $B$ is sufficiently thick. If the inequality \eqref{FC1} is available, then a relatively simple recursion 
leads to the desired conclusion \eqref{ATC}. 

The spatial mixing assumed for all these results is a condition of the Dobrushin-Shlosman type \cite{dobrushin_shlosman}, that can be formulated in terms of exponential decay of correlations. In the literature one finds various degrees of generality of the mixing condition, often loosely referred to as {\em strong spatial mixing}. We refer to the original papers for the precise notions of spatial mixing involved; see also Section \ref{sec:mixing} below for more on this matter. 
We point out that the discussion here is mostly concerned with 
the case of finite 
or compact spin space, in which case one can actually show that \eqref{ATC} is {\em equivalent} to a strong mixing condition \cite{StroockZeg_eq,MO2}. In the case of unbounded 
spins the  techniques and the results are somewhat different; we refer the interested reader to \cite{Zeg96,Yoshida,BodHel,Ledoux,OttoRez,Marton}.

While the inequality \eqref{ATC} is well suited for the analysis of the single site heat bath Markov chain, it is not very helpful in the analysis of more general block dynamics, that is Markov chains where an entire region $A\subset V$ can be resampled at once by a single heat bath move.
With that motivation in mind, 
in this work we address the question of the validity of a version of the inequality \eqref{ATC} where single sites $x\in V$ are replaced by arbitrary blocks $A\subset V$. More precisely, we consider the question of finding the best constant $C$ such that for all nonnegative functions $f$,
\begin{align}\label{BTC}
\g(\a)\,\ent_Vf\leq C
\sum_{A\subset V} \a_A\,
\mu_V\!\left[\ent_Af\right]\,,
\end{align}
where 
$\a=\{\a_A,\,A\subset V\}$  is an arbitrary collection of nonnegative weights, and we define
 \begin{align}\label{BTC1}
\g(\a) =\min_{x\in V} \sum_{A:\,A\ni x}\a_A\,.
\end{align}
If \eqref{BTC} holds with the same constant $C$ for all finite regions $V\subset \bbZ^d$, for all given boundary conditions on $\bbZ^d\setminus V$, and {\em for all choices of weights} $\a$, we say that the spin system satisfies the {\em block factorization of entropy} (with constant $C$).

This definition is inspired by 
the 
fact that  
in the case of infinite temperature, that is if $\mu_V$ is a {\em product measure}, then 
\eqref{BTC} holds with  $C=1$.
Indeed, in this special case it is a consequence of the well known {\em Shearer inequality} satisfied by the Shannon entropy, see \cite{CMT}. These inequalities have far reaching applications in several different settings, see e.g.\ \cite{MadTet,BolloBal,virag_exp}, and it is thus very natural to investigate their validity beyond the product case.

However, as far as we know there are no significant results in the literature concerning the validity of \eqref{BTC} when $\mu_V$ is not a product measure. 
Notice that the tensorization statement \eqref{ATC} corresponds to the special case where $\a_A=1$ or $0$ according to whether $A$ is a single site or not. In this case, the right hand side of \eqref{BTC} has a simple additive structure, a feature that is crucially used in  all existing proofs of \eqref{ATC}.

An important progress was obtained recently in \cite{BCSV} concerning the linearized version of \eqref{BTC}. Namely, if we replace the entropy functional $\ent_V f$ by the {\em variance} functional
 \begin{align}\label{BCSV1}
\var_V\! f = \mu_V\!\left[(f-\mu_V f)^2\right],
\end{align}
then \eqref{BTC} becomes the Poincar\'e inequality
\begin{align}\label{BTCvar}
\g(\a)\var_V\! f\leq C
\sum_{A\subset V} \a_A\,
\mu_V\!\left[\var_A\!f\right],
\end{align}
which we may refer to as the {\em block factorization of variance}. Notice that the inequality \eqref{BTCvar} 
provides the lower bound $\g(\a)/C$ on the spectral gap of the {\em $\a$-weighted block dynamics}, that is the  Markov chain with Dirichlet form defined by
\begin{align}\label{Diri}
\cE_{V,\a}(f,g)=\sum_{A\subset V} \a_A\,
\mu_V\!\left[\cov_A(f,g)\right],
\end{align}
where $\cov_A(f,g)= \mu_A\left[fg\right]- \mu_A\left[f\right] \mu_A\left[g\right]$ denotes the covariance of two functions $f,g$ with respect to $\mu_A$. This is the continuous time Markov chain where each block $A$ independently undergoes full heat bath resamplings at the arrival times of a Poisson process with rate $\a_A\geq 0$, see e.g.\ \cite{MarStFlour}.
 
One of the main results of \cite{BCSV} shows that, if the system satisfies the strong spatial mixing assumption, then it must satisfy 
the special case of \eqref{BTCvar} where the weights $\a$ are all either zero or one, but otherwise arbitrary,  and where $\g(\a)$ is replaced by the indicator $\ind_{\g(\a)>0}$, 
see \cite[Theorem 1.2]{BCSV}.
The proofs in \cite{BCSV} however rely crucially on coupling arguments as in \cite{dyer_et_al}, which do not seem to be effective in establishing the stronger statement \eqref{BTC}. 

In this paper we establish the block factorization of entropy, namely the full statement \eqref{BTC}, provided the system satisfies a strong spatial mixing assumption. For instance, it will follow that the block factorization of entropy holds throughout the whole one phase region for the ferromagnetic Ising/Potts models in two dimensions, provided $V$ in \eqref{BTC} is 
a sufficiently regular set in the sense of \cite{MO1}, 
see Section \ref{sec:mixing}. 

As a corollary, we obtain estimates on the speed of  convergence to equilibrium of any block dynamics. Indeed, Jensen's inequality shows that, for any $A\subset V\subset \bbZ^d$,
\begin{align}\label{Jensen}
\ent_Af\leq \cov_A(f,\log f),
\end{align}
and therefore \eqref{BTC} implies the following {\em modified logarithmic Sobolev inequality}
for any $\a$-weighted block dynamics:
\begin{align}\label{MLSI}
\g(\a)\,\ent_Vf\leq C\, \cE_{V,\a}(f,\log f).
\end{align}
In particular, the block factorization of entropy implies the exponential decay in time of the relative entropy, with rate at least $\g(\a)/C$, for any $\a$-weighted block dynamics. Moreover, if the spin state is finite the bound \eqref{MLSI} implies the 
upper bound 
\begin{align}\label{Jensen2}
\tmix(V,\a)\leq D\,\g(\a)^{-1}\log |V|,
\end{align}
where $|V|$ is the cardinality of the set $V$, $D$ is  some new absolute constant and $\tmix(V,\a)$ denotes the total variation {\em mixing time} of the $\a$-weighted block dynamics.
We refer e.g.\ to \cite{DS,BobTet} for the standard background on these  implications. If the spin state is finite it is also possible to use \eqref{BTC} to derive a
standard logarithmic Sobolev inequality for the 
$\a$-weighted block dynamics in the form
 \begin{align}\label{DS1}
\ent_Vf\leq s(\a)\,\, \cE_{V,\a}\!\left(\sqrt f,\sqrt f\right),
\end{align}
with the constant $$s(\a) = D\,\g(\a)^{-1}\!\max_{A:\,\a_A>0} \log(1/\mu_{A,*}),$$ where $D$ is an absolute constant
and $\mu_{A,*}$ is the minimum value attained by the probability measure $\mu_A$, minimized over the choice of the implicit boundary condition in $\bbZ^d\setminus A$.  Indeed, \eqref{DS1} follows immediately from \eqref{BTC} and a standard bound comparing $\ent_A f$ to $\var_A \!\sqrt f $, see \cite[Corollary A.4]{DS}. 

We conclude this introduction with a brief discussion of the main ideas involved in the proof of our main result \eqref{BTC}. The proof starts with an observation already put forward in \cite{BCSV} for the case of the spectral gap, which allows us to reduce the general factorization problem to the problem of factorization with two special blocks only: the  even sites and the odd sites. The latter is then analyzed via a recursion 
similar to that employed in Cesi's proof of \eqref{ATC}, see \cite{Cesi}. As mentioned above, the main obstacle in implementing the recursion here is the lack of an additive structure, which generates potentially large error terms when trying to restore a block from smaller components. To overcome this difficulty we develop a two-stage recursion, which combines a version of the two-block factorization estimate \eqref{FC1} together with a 
decomposition of the entropy which allows us to smear out the errors coming from the restoration of large blocks, see Theorem \ref{th:maind}. A further crucial ingredient in the proof is a new tensorization lemma which we believe to  be of independent interest, see Lemma \ref{lem:parislemma} below.

The plan of the paper is as follows.  In Section \ref{setup} we describe the setup and the main results. 
In Section \ref{sec:parislemma} we develop some key tools needed for the proof. In Section \ref{sec:proof} we prove the block factorization estimate.

\section{Setup and main results}\label{setup}

\subsection{The spin system}
The underlying graph is the $d$-dimensional integer lattice $\bbZ^d$, with vertices $x=(x_1,\dots,x_d)$, and edges $\cE$ defined as 
unordered pairs $xy$ of vertices $x$ and $y$ such that
$\sum_{i=1}^d|x_i-y_i| =1$. We call $d(\cdot,\cdot)$ the resulting graph distance.  For any set of vertices $\L\subset \bbZ^d$, 
 the exterior boundary is 
 $\partial \L=\{y\in \L^c:\, d(y,\L)=1\}$, where $\L^c=\bbZ^d\setminus \L$.   We write $\bbF$ for the set of finite subsets $\L\subset\bbZ^d$. 

We take the single spin state to be an arbitrary probability space $(S,\scr{S},\nu)$. 
Given any region $\L\subset \bbZ^d$, the associated configuration space is the product  space $(\O_\L,\cF_\L) = (S^\L,\scr{S}^\L)$, whose elements are denoted by  $\si_\L=\{\si_x,\,x\in \L\}$ with $\si_x\in S$ for all $x$. The {\em apriori measure} on $\O_\L$ is the product measure $\nu_\L=\otimes_{x\in \L}\nu$. 

Given a {\em bounded} measurable symmetric function $U:S\times S\mapsto \bbR$, the pair potential, and a {\em bounded} measurable function $W:S\mapsto \bbR$, the single site potential, for any $\L\in \bbF$, and $\t\in\O_{\L^c}$, the Hamiltonian $H_\L^\t:\O_\L\mapsto \bbR$ 
is defined by
\begin{align}\label{Ham}
H_\L^\t (\si_\L) = -\sumtwo{xy\in\cE:}{x,y\in \L}U(\si_x,\si_y) - \sumtwo{xy\in\cE:}{x\in \L,y\in \partial \L}U(\si_x,\t_y) - \sum_{x\in \L} W(\si_x). 
\end{align}
The Gibbs measure in the region $\L\in\bbF$  with boundary condition $\t\in\O_{\L^c}$ is the probability measure $\mu_\L^\t$ on $ (\O_\L,\cF_\L)$ defined by 
\begin{align}\label{Gib}
\mu_\L^\t(d\si_\L) = \frac1{Z_\L^\t}\exp\left[-H_\L^\t (\si_\L)\right] \nu_\L(d\si_\L) \,,
\end{align}
where $Z_\L^\t$ 
is the normalizing constant.

For any 
measurable function $f:\O_\L\mapsto \bbR$ we write $\mu_\L^\t f $ for the expectation of $f$ under $\mu_\L^\t$, and write $\mu_\L f$ for the measurable function $\O_{\L^c}\ni\t\mapsto \mu_\L^\t f$. 
A fundamental feature of the family of measures $\{\mu_\L^\t,\,\L\in\bbF\,,\t\in\O_{\L^c}\}$ is the so-called DLR property:
\begin{align}\label{DLR}
\mu_V
\mu_\L f
 = \mu_V f\,,
\end{align}
valid for all $\L\subset V\in\bbF$, and for all bounded measurable function $f:\O_V\mapsto \bbR$.

%
%
\subsection{Examples and remarks}
Below we list some standard examples which fit the general framework defined above and discuss possible extensions. 
We refer the reader to \cite{FriedliVelenik} for an introduction to the statistical mechanics of lattice spin systems.  

\subsubsection{Finite spins}
When the space $S$ is finite we take $\nu$ as the counting measure on $S$. The {\em Potts model} corresponds to  $S=\{1,\dots,q\}$, with $q\geq 2$ a fixed integer, $$U(s,s')=\b\,\ind_{\{s=s'\}}\,,\qquad W(s)=\b\, h_s,$$ where the parameter $\b\in\bbR$ is related to the inverse temperature of the system and the fixed vector $(h_1,\dots,h_q)\in \bbR^q$ to an external magnetic field.  When $\b\geq0$ the model is called ferromagnetic. When $q=2$ the Potts model is called the {\em Ising model}.  
In the case of finite spin space, in order to include spin systems with hard constraints, we shall also allow the function $U$ to take the value $-\infty$. The spin system is called {\em permissive} if  for every $\L\in\bbF$, for every $\t\in \O_{\L^c}$, there exists $\si_\L\in\O_\L$ with positive mass under $\mu_\L^\t$, that is such that $\mu_\L^\t(\si_\L) >0$. Well known examples of permissive spin systems include the {\em hard-core model} with parameter $\l$, for any $\l>0$, and the uniform distribution over {\em proper $q$-colorings}, 
for any integer $q\geq 2d+1$. 
The hard-core model with parameter $\l$ corresponds to $S=\{0,1\}$, $U(1,1)=-\infty$, $U(1,0)=U(0,1)=U(0,0)=0$, $W(s)=s\log(\l)$, while the uniform distribution over proper $q$-colorings corresponds to the limit $\b\to-\infty$ in the Potts model. A permissive spin system is called {\em irreducible} if the single site heat bath Markov chain on $\L$ with boundary condition $\t$ is irreducible for any choice of $\L\in\bbF$ and $\t\in\O_{\L^c}$, see \cite[Section 2]{BCSV}. 
Our main results below will apply to permissive irreducible spin systems.

\subsubsection{Continuous compact spins}
Other classical examples are obtained when $S$ is a compact subset of $\bbR^n$ and $\nu$ is the uniform distribution over $S$. The {\em {\em O}$(n)$ model}, for $n\geq 2$, corresponds to the case where $S$ is the unit sphere in $\bbR^n$, $\b\in\bbR$,  $$U(s,s')=\b\scalar{s}{s'}\,,\qquad W(s)=\b\scalar{s}{v}\,,$$ for some fixed vector $v\in S$, with $\scalar{\cdot}{\cdot}$ denoting the standard inner product in $\bbR^n$. 

\subsubsection{Unbounded spins}
The setup introduced above includes unbounded (continuous or discrete) spins. When $S=\bbZ_+$ for instance it covers the particle systems considered in \cite{PPP}. It should be however clear that the boundedness assumptions on the interaction $U$ rules out many interesting models in the unbounded setting.  


\subsubsection{Extensions}
Concerning possible extensions of our main results to more general settings, we remark that the definitions given above can be extended to include spatially non-homogeneous models, with pair potentials $U$ and site potentials $W$ replaced by edge dependent functions $U_{xy}$ and site dependent functions $W_x$ respectively. It is not difficult to check that all results in this paper can be extended to include these cases provided that all the estimates involved in our assumptions are uniform with respect to the new potentials. 
Finally, we remark that our setup is restricted to the case of nearest neighbor interactions, and  
the extension of our main results to more general finite range spin systems is not immediate. Indeed, our proof makes explicit use of the nearest neighbor structure at various places. We believe however that a similar approach can be used, provided the decomposition into even and odd sites used in our proof is replaced by more general tilings such as the ones used in \cite{BCSV}.  

%
%
%
 
\subsection{Spatial mixing} \label{sec:mixing}
The notion of spatial mixing to be considered 
belongs to the family of 
{\em strong spatial  mixing} conditions. In the case of finite spins 
it is one of many equivalent 
conditions introduced by Dobrushin and Shlosman \cite{dobrushin_shlosman} to characterize the so-called {\em complete analyticity} regime.

The precise formulation we give here coincides with the one adopted in Cesi's paper \cite{Cesi}.
For any $\D\subset \L\in\bbF$ we call $\mu_{\L,\D}^\t$ the marginal of $\mu_{\L}^\t$ on $\O_\D$. A version of the  
Radon-Nikodym density of $\mu_{\L,\D}^\t$ with respect to $\nu_\D$ is given by the function
\begin{align}\label{radon-nikodym}
\psi_{\L,\D}^\t(\si_\D):=\frac1{Z_\L^\t}\int\exp\left[-H_\L^\t (\eta_{\L\setminus\D}\si_\D)\right] \nu_{\L\setminus\D}(d\eta_{\L\setminus\D}),
\end{align}
where $\eta_{\L\setminus\D}\si_\D$ denotes the configuration  $\xi\in\O_\L$ such that $\xi_x=\eta_x$ if $x\in\L\setminus\D$ and $\xi_x=\si_x$ if $x\in\D$. 

\begin{definition}\label{def:ssm}
Given constants $K,a>0$, and $\L\in\bbF$ we say that condition $\cC(\L,K,a)$ holds if 
for any 
$\D\subset \L$,  for all $x\in \partial\L$:
\begin{align}\label{ssmca}
\sup_{\t,\t'}\, 
\left\|\,\frac{\psi_{\L,\D}^{\t'}}{\psi_{\L,\D}^{\t}}\,-\,1\,\right\|_\infty\leq \,K\,e^{-a\,d(x,\D)},
\end{align}
where 
$\t,\t'\in\O_{\L^c}$ are such that $\t_y=\t'_y$ for all $y\neq x$, and $\|\cdot\|_\infty$ denotes the $L^\infty$ norm. We say that the spin system satisfies $SM(K,a)$ if $\cC(\L,K,a)$ holds for all $\L\in\bbF$. 
  \end{definition}

As emphasized in \cite{MO1} it is often 
 important to consider a relaxed spatial mixing condition that requires $\cC(\L,K,a)$ to hold only for all sufficiently ``fat" sets $\L$. The latter is defined as follows. 
 \begin{definition}\label{def:ssmreg}
 Given $L\in\bbN$, let $Q_L=[0,L-1]^d\cap\bbZ^d$ be the lattice cube of side $L$ located  at the origin. 
 For any $y\in\bbZ^d$, define the translated cube $Q_L(y)=Ly + Q_L$. 
Let $\bbF^{(L)}$ be the set of all $\L\in\bbF$ of the form 
$$
\L=\bigcup_{y\in \L'} Q_L(y)\,
$$
 for some $\L'\in\bbF$. 
The spin system satisfies $SM_L(K,a)$ if $\cC(\L,K,a)$ holds for all $\L\in\bbF^{(L)}$. 
 \end{definition} 
 
For systems without hard constraints it is well known that $SM(K,a)$, for some $K,a$, is always satisfied in dimension one, and that for any dimension $d>1$ it holds under the assumption of suitably high temperature,
see e.g.\ \cite{MarStFlour}. 
It is important to note that the validity of both 
$SM(K,a)$ and $SM_L(K,a)$ can be ensured by checking  finite size conditions only \cite{Fabio_hx}.
 
 We recall that 
 $SM(K,a)$ can be strictly stronger than $SM_L(K,a)$. 
For instance, as a consequence of results in \cite{MOS,Alexander,BeffaraDuminil} it is known that the two-dimensional ferromagnetic Potts model satisfies $SM_L(K,a)$, for some $K,a>0$ and  $L\in\bbN$, throughout the whole uniqueness region, while $SM(K,a)$ cannot hold in this generality.


Finally, we  note that $\cC(\L,K,a)$ 
is too strong a requirement in the case of systems with hard constraints, since 
$\mu_{\L,\D}^{\t'}$ may be not absolutely continuous with respect to $\mu_{\L,\D}^\t$.
However, since \eqref{ssmca} will only be relevant if $d(x,\D)$ is sufficiently large, in order to have a meaningful assumption for permissive spin systems with hard constraints, we may rephrase the condition $SM_L(K,a)$ by requiring, for all $\L\in\bbF^{(L)}$, that \eqref{ssmca} holds for all $\D\subset \L$ and $x\in \partial \L$ such that $d(x,\D)\geq L/2$.

\subsection{Main results}
We first recall some standard notation. For any $V\in\bbF$, $\t\in\O_{V^c}$, and $f:\O_V\mapsto\bbR_+$ with $f\log^+\!f\in L^1(\mu_V^\t)$, we write $\ent_V^\t f$ for the entropy 
 \begin{align}\label{ent}
\ent_V^\t f = \mu_V^\t\left[f\log\left(f/\mu_V^\t f\right)\right],
\end{align}
and use the notation $\ent_V f$ for the function $\t\mapsto \ent_V^\t f$. 
\begin{theorem}\label{th:mainresult}
Suppose that the spin system satisfies $SM(K,a)$ for some constants $K,a>0$. Then there exists a constant $C>0$ such that for all $V\in\bbF$, $\t\in \O_{V^c}$, for all nonnegative weights $\a=\{\a_A,\,A\subset V\}$,
for all 
$f:\O_V\mapsto\bbR_+$ with $f\log^+\!f\in L^1(\mu_V^\t)$,
\begin{align}\label{BTCt}
\g(\a)\,\ent_V^\t f\leq C
\sum_{A\subset V} \a_A\,
\mu_V^\t\!\left[\ent_Af\right],
\end{align}
where $\g(\a) =\min_{x\in V} \sum_{A:\,A\ni x}\a_A$.
If instead the spin system satisfies $SM_L(K,a)$ for some constants $K,a>0$, $L\in\bbN$, then the conclusion \eqref{BTCt} continues to hold, provided we require that $V\in\bbF^{(L)}$. 
\end{theorem}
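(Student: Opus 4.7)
The plan is to first reduce the general block factorization \eqref{BTCt} to the special case involving only two blocks: the even sublattice $E=\{x\in V:\sum_i x_i\text{ even}\}$ and the odd sublattice $O=V\setminus E$. The distinguishing feature of this partition is that under $\mu_V^\t$ the conditional law of $\sigma_E$ given $\sigma_O$ (and vice-versa) is a product measure, because even sites interact only with odd ones. This structural property was already exploited in \cite{BCSV} for the spectral gap; the analogous reduction for entropy goes as follows. Assuming the uniform \emph{even/odd factorization}
\[
\ent_V^\t f \leq C_0\,\mu_V^\t\!\left[\ent_E f + \ent_O f\right],\qquad V\in\bbF,
\]
one applies the classical Shearer inequality to the product measure $\mu_V^\t(\cdot\tc\sigma_O)$ on $E$ with weights $\a_A$ (so that every $x\in E$ is covered with total weight $\geq \g(\a)$), and uses DLR together with the chain rule $\ent_A f \geq \mu_A[\ent_{A\cap E} f]$ to conclude $\g(\a)\,\mu_V^\t[\ent_E f]\leq\sum_A\a_A\,\mu_V^\t[\ent_A f]$; the same bound for $O$, combined with the even/odd estimate, yields \eqref{BTCt} with $C=2C_0$.

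The core content is therefore the uniform even/odd factorization. For this I would adopt a Cesi-type recursion, following \cite{Cesi}. Given a cube $\L$, I split it along one axis into two overlapping subcubes $A,B$ whose intersection is a slab of width chosen to grow slowly with the side of $\L$. Strong spatial mixing \eqref{ssmca}, via the standard two-block entropy estimates \cite{Cesi,MarStFlour}, yields
\[
\ent_\L^\t f \leq (1+\e_\L)\,\mu_\L^\t\!\left[\ent_A f + \ent_B f\right],
\]
with $\e_\L$ exponentially small in the slab width. Iterating dyadically down to a finite base scale $\ell=\ell(K,a)$ produces a bounded total multiplicative error, because the $\e_\L$ are summable.

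The main obstacle is that this recursion naturally targets the single-site decomposition of $\ent_\L$, whereas we need the even/odd decomposition. Replacing single sites by $E,O$ destroys the additive right-hand side that makes the classical argument close: when the recursion eventually tries to recombine large even/odd blocks from their small components, the overlap errors do not telescope, and a naive accumulation would produce a constant growing with $|V|$. My plan to overcome this is the two-stage recursion announced in the introduction as Theorem \ref{th:maind}. The first stage runs the overlapping two-block recursion down to a fixed cube scale $\ell$ depending only on $K,a$. The second stage converts the resulting $\ell$-cube entropies into even/odd entropies on $V$; it is driven by the new tensorization Lemma \ref{lem:parislemma}, which provides a \emph{perturbed product} entropy inequality, allowing the marginal of $\mu_V^\t$ on $E$ (or $O$) to be treated as essentially a product across distant $\ell$-cubes, with constants depending only on $K,a,\ell$. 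This mechanism smears the overlap errors uniformly over sites rather than letting them pile up.

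The hardest step, I expect, is establishing Lemma \ref{lem:parislemma} in a form robust enough to tolerate the small, SSM-controlled correlations between the even (or odd) marginals across different $\ell$-cubes while still yielding a constant independent of $V$. Once this is in hand, combining it with the Cesi recursion at scale $\ell$ delivers a dimension-free $C_0$, and the reduction of the first paragraph gives \eqref{BTCt}. The case $SM_L(K,a)$ is treated in the same way, restricting the recursion to cubes and unions of cubes whose sides are multiples of $L$, so that all intermediate regions stay in $\bbF^{(L)}$ and the SSM estimate is only invoked for $\D$ with $d(x,\D)\geq L/2$, as permitted by the relaxed form discussed after Definition \ref{def:ssmreg}.
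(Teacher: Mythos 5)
Your first paragraph — the reduction of \eqref{BTCt} to the uniform even/odd factorization $\ent_V^\t f\leq C_0\,\mu_V^\t[\ent_Ef+\ent_Of]$ via Shearer's inequality for the product measures $\mu_E$, $\mu_O$ — is correct and is exactly the paper's Proposition \ref{prop:reduction}. The gap is in your plan for the even/odd factorization itself. Your ``second stage'' rests on reading Lemma \ref{lem:parislemma} as a \emph{perturbed} product inequality, robust to ``small, SSM-controlled correlations between the even (or odd) marginals across different $\ell$-cubes.'' That is not what the lemma says and not something one can get cheaply: the lemma requires $\mu_\L$ to be an \emph{exact} product over the rows $R_i$ (which is why the paper only ever applies it to families of blocks at mutual distance $>1$, where the conditional Gibbs measure genuinely factorizes). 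A perturbative version fails for exactly the reason spelled out in Remark \ref{rem:strength}: the two-block SSM estimate \eqref{Cesia2} does not tensorize, and the error one must tolerate grows linearly in the number of blocks. So ``treating the marginal of $\mu_V^\t$ on $E$ as essentially a product across distant $\ell$-cubes'' is precisely the step that does not close with a $V$-independent constant.

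What actually closes the argument in the paper is different in two ways you do not supply. First, the recursion is not ``down to a fixed base scale $\ell$ and then recombine''; it is a scale-by-scale recursion on the best even/odd constant $\d(k)$ over rectangles of side $\ell_k=(3/2)^{k/d}$, in which the non-telescoping error terms $\mu[\ent_{EB}\mu_{A_i}f-\ent_{EB}\mu_{EA_i}f]$ are controlled by \emph{averaging over $r\sim\ell_k/4$ positions of the overlapping cut}: the martingale/telescoping identity of Lemma \ref{lem:anyD} shows the sum of these errors over all $r$ cut positions is bounded by a single global entropy $\ent f$, so each recursion step costs only a relative error $2/(r\,\d(k-1))$. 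Second, even this is useless without an a priori lower bound showing $\d(k-1)\gg 1/\ell_k$; the paper obtains $\d(k)\geq\ell_k^{-\e}$ by a separate, cruder recursion (Theorem \ref{th:toy}) — and \emph{that} is where Lemma \ref{lem:parislemma} is used, applied to unions of well-separated blocks where the product structure is exact. Your proposal mentions neither the averaging-over-cuts device nor the need for this auxiliary polynomial bound, and without them the accumulated overlap errors you correctly identify as the main obstacle remain uncontrolled.
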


As we mentioned in Section \ref{sec:1}, Theorem \ref{th:mainresult} has the following immediate corollary for the $\a$-weighted block dynamics defined by \eqref{Diri}. Below, $\cE^\t_{V,\a}(f,g)$ denotes the Dirichlet form \eqref{Diri} evaluated at a given boundary condition $\t\in\O_{V^c}$.  
\begin{corollary}\label{corolla}
If the spin system satisfies $SM(K,a)$ for some constants $K,a>0$, then the following modified logarithmic Sobolev inequalities hold: 
for all $V\in\bbF$, all $\t\in \O_{V^c}$, for all weights $\a$,
for all $f:\O_V\mapsto\bbR_+$ with $f\log^+\!f\in L^1(\mu_V^\t)$,
\begin{align}\label{MLSIa}
\g(\a)\,\ent_V^\t f\leq C\,\cE^\t_{V,\a}(f,\log f),
\end{align}
where $\g(\a)$ and $C$ are the same constants appearing in \eqref{BTCt}.
In particular, if the spin state $S$ is finite, then there exists a constant $D>0$ such that for all $V\in\bbF$, $\t\in \O_{V^c}$, for all weights $\a$, the 
mixing time $\tmix^\t(V,\a)$ of the Markov chain with Dirichlet form $ \cE^\t_{V,\a}$ satisfies
\begin{align}\label{tmix2}
\tmix^\t(V,\a)\leq D\,\g(\a)^{-1}\log |V|.
\end{align}
Moreover, if the spin state is finite, then $SM(K,a)$ implies the following logarithmic Sobolev inequalities: 
there exists a constant  $D>0$ such that for all $V\in\bbF$, all $\t\in \O_{V^c}$, for all weights $\a$, all $f\geq 0$
with $f\log^+\!f\in L^1(\mu_V^\t)$,
 \begin{align}\label{DS1a}
\ent^\t_Vf\leq s(\a)\, \,\cE^\t_{V,\a}\!\left(\sqrt f,\sqrt f\right),\qquad s(\a) = D\,\g(\a)^{-1}\!\max_{A:\,\a_A>0} \log(1/\mu_{A,*}),
\end{align}
where $$\mu_{A,*} = \min_{\t\in\O_{A^c}}\min_{\si_A\in\O_A:\, \mu_A^\t(\si_A)>0}\mu_A^\t(\si_A)\,.$$
Finally, all statements above continue to hold if we only assume $SM_L(K,a)$ for some constants $K,a>0$ and $L\in\bbN$, provided we restrict to $V\in\bbF^{(L)}$. 
 \end{corollary}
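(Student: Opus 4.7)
The plan is to deduce the four statements in Corollary \ref{corolla} from Theorem \ref{th:mainresult} by combining the block factorization \eqref{BTCt} with standard tools; Theorem \ref{th:mainresult} carries all the content, and the corollary is a routine packaging.

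For \eqref{MLSIa} I would apply the pointwise Jensen inequality \eqref{Jensen} block by block. Since $\ent_A f \leq \cov_A(f,\log f)$ holds as an inequality of measurable functions on $\O_{A^c}$, taking $\mu_V^\t$-expectation and summing against the weights $\a_A$ converts the right-hand side of \eqref{BTCt} into $C\,\cE^\t_{V,\a}(f,\log f)$ by the definition \eqref{Diri}. This yields \eqref{MLSIa} with the same constant $C$ as in \eqref{BTCt}.

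For the mixing time estimate \eqref{tmix2} I would use the classical entropy-decay plus Pinsker argument. Let $P_t$ be the semigroup of the $\a$-weighted block dynamics with stationary measure $\mu_V^\t$, and let $h_t = dP_t(\si,\cdot)/d\mu_V^\t$ be the density of the distribution at time $t$ started from $\si\in\O_V$. Reversibility and the identity $\frac{d}{dt}\ent_V^\t(h_t) = -\cE^\t_{V,\a}(h_t,\log h_t)$, combined with \eqref{MLSIa}, yield the exponential decay
\begin{align}
\ent_V^\t(h_t) \leq e^{-(\g(\a)/C)\, t}\,\log\bigl(1/\mu_V^\t(\si)\bigr).
\end{align}
Since $S$ is finite and $U,W$ are bounded, $\log(1/\mu_V^\t(\si)) \leq c_0|V|$ uniformly in $\si$ and $\t$, with $c_0$ depending only on the spin system. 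Pinsker's inequality $\|P_t(\si,\cdot) - \mu_V^\t\|_{TV}^2 \leq \frac{1}{2}\ent_V^\t(h_t)$ then forces $\tmix^\t(V,\a) \leq D\,\g(\a)^{-1}\log|V|$ for a suitable absolute constant $D$.

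For \eqref{DS1a} the extra ingredient I would invoke is the crude block-wise comparison between entropy and variance of the square root. Specifically, \cite[Corollary A.4]{DS} applied to the restriction of $\mu_A^\t$ to its support provides an absolute constant $D_0$ and the pointwise bound
\begin{align}
\ent_A f \leq D_0 \log(1/\mu_{A,*})\,\cov_A(\sqrt f,\sqrt f).
\end{align}
Plugging this into the right-hand side of \eqref{BTCt} and factoring out $\max_{A:\,\a_A>0}\log(1/\mu_{A,*})$ produces \eqref{DS1a} with $s(\a) = D\,\g(\a)^{-1}\max_{A:\,\a_A>0}\log(1/\mu_{A,*})$ for a suitable $D$. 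Finally, for the $SM_L(K,a)$ extension I would simply observe that each of the three steps above uses \eqref{BTCt} only on the specific set $V$ under consideration, and Theorem \ref{th:mainresult} provides \eqref{BTCt} for every $V\in\bbF^{(L)}$ under this weaker hypothesis. The substantive work is entirely contained in Theorem \ref{th:mainresult}; there is no further obstacle to overcome beyond invoking it correctly.
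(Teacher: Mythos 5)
Your proposal is correct and follows essentially the same route as the paper, which derives \eqref{MLSIa} from \eqref{BTCt} via the blockwise Jensen inequality \eqref{Jensen}, obtains \eqref{tmix2} by the standard entropy-decay/Pinsker argument (cited to \cite{DS,BobTet}), and gets \eqref{DS1a} from \eqref{BTCt} together with \cite[Corollary A.4]{DS}. The only difference is that you spell out the standard mixing-time argument that the paper merely references.
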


\section{Some key  tools}\label{sec:parislemma}
In this section we collect some key general facts that do not depend on the spatial mixing assumption. 
We start by recalling some standard decompositions of the entropy.  Next, we prove a new general tensorization lemma. 
Finally, we revisit the two-block factorization \eqref{FC1}. 

Some remarks on the notation are in order. We fix a region $V\in\bbF$ and a boundary condition $\t\in\O_{V^c}$. To avoid heavy notation, we often omit explicit reference to $V,\t$. In particular, whenever possible we shall use  the following shorthand notation
\begin{align}\label{short}
\mu f = \mu_V^\t f\,,\qquad \ent f = \ent_V^\t f\,.
\end{align}
Moreover, whenever we write $\mu_\L$ or $\ent_\L$ for some $\L\subset V$, we assume that the implicit boundary condition outside $\L$ has been fixed, and it agrees with $\t$ outside of $V$. 
Unless otherwise stated, $f$ will always denote a nonnegative measurable function such that $f\log^+ f\in L^1(\mu)$. 
To avoid repetitions, we simply write $f\geq 0$ throughout. As a convention, we set $\mu_\emptyset f = f$ and $\ent_\emptyset f=0$.

\subsection{Preliminaries} 
We first recall a standard lemma that will be repeatedly used. 
\begin{lemma}\label{lem:telescope}
For any $\L\subset V$, for any $f\geq 0$:
\begin{align}\label{deco}
\ent f = \mu\left[\ent_{\L}f\right]
+ \ent\,\mu_\L f.
\end{align}
More generally, for any 
$\L_0\subset \L_1\subset\cdots\subset \L_k\subset V$, for any $f\geq 0$:
\begin{align}\label{tele}
\sum_{i=1}^{k}\mu\left[\ent_{\L_{i}} \mu_{\L_{i-1}}f\right]
=\mu\left[\ent_{\L_k}\mu_{\L_0}f\right].
\end{align}
\end{lemma}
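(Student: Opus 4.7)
The plan is to derive \eqref{deco} first from a direct computation, and then obtain the telescoping identity \eqref{tele} by iterating \eqref{deco}.

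For the identity \eqref{deco}, I would start by using the DLR property \eqref{DLR} to write $\mu = \mu \mu_\Lambda$ in the definition of $\ent f$, and then insert $\log(\mu_\Lambda f)$ to split the logarithm:
\begin{align}
\ent f \;=\; \mu[f\log(f/\mu f)] \;=\; \mu\bigl[f\log(f/\mu_\Lambda f)\bigr] + \mu\bigl[f\log(\mu_\Lambda f/\mu f)\bigr].
\end{align}
For the first term, I apply DLR once more, $\mu = \mu\mu_\Lambda$, and recognize the inner conditional expectation as $\ent_\Lambda f$, obtaining $\mu[\ent_\Lambda f]$. For the second term, I observe that $\log(\mu_\Lambda f/\mu f)$ is $\cF_{V\setminus\Lambda}$--measurable, so the inner $\mu_\Lambda$--average reduces to $\mu_\Lambda f \cdot \log(\mu_\Lambda f/\mu f)$, which is exactly $\ent\,\mu_\Lambda f$ after using $\mu\mu_\Lambda f = \mu f$.

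For the telescoping identity \eqref{tele}, I would apply \eqref{deco} to each summand individually. Applied to the function $h_i := \mu_{\Lambda_{i-1}}f$ with $\Lambda$ replaced by $\Lambda_i$, and using $\Lambda_{i-1}\subset\Lambda_i$ together with DLR to conclude $\mu_{\Lambda_i}\mu_{\Lambda_{i-1}}f = \mu_{\Lambda_i}f$, it follows that
\begin{align}
\mu\bigl[\ent_{\Lambda_i}\mu_{\Lambda_{i-1}}f\bigr] \;=\; \ent\,\mu_{\Lambda_{i-1}}f \;-\; \ent\,\mu_{\Lambda_i}f.
\end{align}
Summing over $i=1,\dots,k$ telescopes the right-hand side to $\ent\,\mu_{\Lambda_0}f - \ent\,\mu_{\Lambda_k}f$. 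Finally, one more application of \eqref{deco}, this time to the function $\mu_{\Lambda_0}f$ with $\Lambda = \Lambda_k$, yields
\begin{align}
\ent\,\mu_{\Lambda_0}f \;=\; \mu\bigl[\ent_{\Lambda_k}\mu_{\Lambda_0}f\bigr] + \ent\,\mu_{\Lambda_k}f,
\end{align}
so the telescoped sum matches $\mu[\ent_{\Lambda_k}\mu_{\Lambda_0}f]$ exactly.

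I do not expect any real obstacle here: the whole argument consists of (i) the definition of conditional entropy, (ii) the DLR consistency $\mu = \mu \mu_\Lambda$ and the tower property $\mu_{\Lambda_i}\mu_{\Lambda_{i-1}} = \mu_{\Lambda_i}$ for nested sets, and (iii) $\cF_{V\setminus\Lambda}$--measurability of $\mu_\Lambda f$. The only minor point to handle carefully is the integrability, which is guaranteed by the assumption $f\log^+\!f\in L^1(\mu)$ and monotonicity of $L^1$ under conditional expectations.
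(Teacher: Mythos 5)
Your proof is correct and rests on exactly the same ingredients as the paper's: the telescoping of $\log(\mu_{\L_0}f/\mu_{\L_k}f)$ into $\sum_i\log(g_{i-1}/g_i)$, the DLR/tower property $\mu_{\L_i}\mu_{\L_{i-1}}=\mu_{\L_i}$, and the measurability of $\mu_\L f$ with respect to the outside variables. The only difference is the logical order — the paper proves \eqref{tele} directly in one telescoping computation and obtains \eqref{deco} as the special case $\L_0=\emptyset$, $\L_1=\L$, $\L_2=V$, whereas you prove \eqref{deco} first and iterate it — which is an immaterial reorganization of the same argument.
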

\begin{proof}
The identity \eqref{deco} follows from \eqref{tele} in the case $k=2$ with $\L_0=\emptyset$, $\L_1=\L$, $\L_2=V$. 
To prove \eqref{tele}, set 
$g_i=\mu_{\L_i}f$, and note that $g_{i}=\mu_{\L_{i}}g_{i-1}$ by \eqref{DLR}. Therefore,
\begin{align}\label{tele1}
\mu\left[\ent_{\L_k}g_0\right] &= \mu\left[g_0
\log\left(g_0/g_k\right)\right]
\\
&
=\sum_{i=1}^{k}  \mu\left[g_{i-1}\log \left(g_{i-1}/g_{i}\right)\right]
\\
&
=\sum_{i=1}^{k}  \mu\left[\mu_{\L_i}\left(g_{i-1}\log \left(g_{i-1}/g_{i}\right)\right)\right]
=\sum_{i=1}^{k}\mu\left[\ent_{\L_{i}} g_{i-1}\right].
\end{align}
\end{proof}

\subsection{A new tensorization lemma}
Consider subsets $$A_{i,j}\subset V\in\bbF \,, \qquad i=1,\dots,n\,,\;j=1,\dots,m,$$ such that $\cup_{i,j}A_{i,j}=\L\subset V$,  and define  ``row'' subsets  and ``column'' subsets:
$$R_i:=\cup_{j=1}^m A_{i,j}\,,\qquad C_j:=\cup_{i=1}^n A_{i,j}\,.$$
Assume that $\mu_\L$ is a product measure along the partition $\{R_i,\,i=1\dots,n\}$ of $\L$:
$$
\mu_\L=\otimes_{i=1}^n\mu_{R_i}\,.
$$
Notice that this is the case if $\{R_1,\dots,R_n\}$
are 
such that $d(R_i,R_j)>1$ for all $i\neq j$. 
\begin{lemma}\label{lem:parislemma}
Let $s_i>0$ be constants such that 
for each $i=1,\dots,n$, for all $f\geq 0$, 
\begin{equation}\label{paris2}
\ent_{R_i}f \leq  s_i \sum_{j=1}^m\mu_{R_i}[\ent_{A_{i,j}}f].
\end{equation}
Then
\begin{equation}\label{paris3}
\ent_\L f\leq  s \sum_{j=1}^m\mu_\L[\ent_{C_j}f],
\end{equation}
where $s=\max_is_i$.
\end{lemma}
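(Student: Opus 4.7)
The plan is to expand both sides of \eqref{paris3} via telescoping identities from Lemma \ref{lem:telescope} and then compare term-by-term using convexity of the relative entropy. Set $s=\max_i s_i$. Using the product structure $\mu_\L=\otimes_i\mu_{R_i}$, consider the nested sequence $\L_k=R_1\cup\cdots\cup R_k$ and the partial averages $f_k:=\mu_{\L_k}f=\mu_{R_1}\cdots\mu_{R_k}f$. Equation \eqref{tele}, together with the observation that $\ent_{\L_k}f_{k-1}=\ent_{R_k}f_{k-1}$ (since $f_{k-1}$ depends only on the configuration in $R_k\cup\cdots\cup R_n$ and $\mu_\L$ factorizes over rows), yields the identity $\ent_\L f=\sum_{k=1}^n\mu_\L[\ent_{R_k}f_{k-1}]$. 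Applying the hypothesis \eqref{paris2} to each $f_{k-1}$ viewed as a function of $\si_{R_k}$ then gives
\[
\ent_\L f\le s\sum_{j=1}^m\sum_{k=1}^n\mu_\L[\ent_{A_{k,j}}f_{k-1}].
\]

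The next step is to perform an analogous telescoping on each column $C_j$. Since the cells $\{A_{i,j}\}_{i=1}^n$ sit in distinct, separated rows, they are independent under $\mu_{C_j}$, so $\mu_{C_j}=\otimes_i\mu_{A_{i,j}}$. Setting $\tilde f_{k-1}^{(j)}:=\mu_{A_{1,j}}\cdots\mu_{A_{k-1,j}}f$ and applying \eqref{tele} on $C_j$ with the nested sequence $\Gamma_k^{(j)}:=A_{1,j}\cup\cdots\cup A_{k,j}$ produces, after noting that $\tilde f_{k-1}^{(j)}$ only varies with $\si_{A_{k,j}}$ within $\Gamma_k^{(j)}$, the exact identity
\[
\mu_\L[\ent_{C_j}f]=\sum_{k=1}^n\mu_\L[\ent_{A_{k,j}}\tilde f_{k-1}^{(j)}].
\]
Comparing with the previous display, the lemma reduces to the pointwise bound $\mu_\L[\ent_{A_{k,j}}f_{k-1}]\le\mu_\L[\ent_{A_{k,j}}\tilde f_{k-1}^{(j)}]$ for every $k$ and $j$.

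The crux is this last inequality, which comes from convexity of the relative entropy. The DLR identity $\mu_{R_l}\mu_{A_{l,j}}=\mu_{R_l}$ combined with the commutativity of operators acting on distinct independent rows lets one rewrite $f_{k-1}=\mu_{\L_{k-1}}\tilde f_{k-1}^{(j)}$. Because $\L_{k-1}$ is independent of $R_k\supset A_{k,j}$, the measure $\mu_{A_{k,j}}$ is independent of $\si_{\L_{k-1}}$; convexity of $g\mapsto\ent_{A_{k,j}}g$ at fixed $\mu_{A_{k,j}}$ then yields, via Jensen's inequality,
\[
\ent_{A_{k,j}}f_{k-1}=\ent_{A_{k,j}}\bigl[\mu_{\L_{k-1}}\tilde f_{k-1}^{(j)}\bigr]\le\mu_{\L_{k-1}}\bigl[\ent_{A_{k,j}}\tilde f_{k-1}^{(j)}\bigr],
\]
and integrating against $\mu_\L$ with DLR gives the desired pointwise bound.

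The main conceptual obstacle this approach must navigate is that \eqref{paris3} is strictly stronger than the cell-level bound $\ent_\L f\le s\sum_{j,k}\mu_\L[\ent_{A_{k,j}}f]$ that follows from naive row-tensorization plus \eqref{paris2}: indeed, column-tensorization on $\mu_{C_j}=\otimes_i\mu_{A_{i,j}}$ gives $\mu_\L[\ent_{C_j}f]\le\sum_i\mu_\L[\ent_{A_{i,j}}f]$, which is the opposite of the direction one would need for a direct replacement. The resolution is that the row-telescoping step introduces the partially averaged surrogates $f_{k-1}$ in place of $f$, and the extra smoothing encoded in $f_{k-1}=\mu_{\L_{k-1}}\tilde f_{k-1}^{(j)}$ is exactly what makes the pointwise inequality valid through convexity.
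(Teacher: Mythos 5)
Your proof is correct and follows essentially the same route as the paper's: telescope over the rows $\L_k=R_1\cup\dots\cup R_k$, apply the hypothesis to each $\ent_{R_k}\mu_{\L_{k-1}}f$, telescope each column $C_j$ along $\L_{k,j}=\L_k\cap C_j$, and reduce to the comparison $\mu_\L[\ent_{A_{k,j}}\mu_{\L_{k-1}}f]\leq\mu_\L[\ent_{A_{k,j}}\mu_{\L_{k-1,j}}f]$, which both arguments obtain from the commutation identity $\mu_{A_{k,j}}\mu_{\L_{k-1}}=\mu_{\L_{k-1}}\mu_{A_{k,j}}$ on independent rows. Your appeal to convexity of $g\mapsto\ent_{A_{k,j}}g$ via Jensen is just a repackaging of the paper's variational-principle computation, so the two proofs are equivalent.
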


\begin{proof}
To simplify the notation, we write $\mu=\mu_\L$ and $\ent_\L f=\ent f$. 
Setting $\L_k = \cup_{i=1}^k R_i$, with $\L_0=\emptyset$, from Lemma \ref{lem:telescope} we have 
\begin{equation}\label{paris4}
\ent f= \sum_{k=1}^n \mu\left[\ent_{\L_{k}}\mu_{\L_{k-1}}f\right].
\end{equation}
Since $\mu_{\L_k}$ is a product of $\mu_{R_i}$, $i=1,\dots,k$, we have 
\begin{equation}\label{paris5}
\ent f= \sum_{k=1}^n \mu\left[\ent_{R_{k}}\mu_{\L_{k-1}}f\right].
\end{equation}
From \eqref{paris2} we estimate
\begin{equation}\label{paris6}
\ent f\leq  s \sum_{k=1}^n\sum_{j=1}^m\mu[\ent_{A_{k,j}}\mu_{\L_{k-1}}f].
\end{equation}
The proof is complete once we show that for each $j$,
\begin{equation}\label{paris7}
\sum_{k=1}^n\mu[\ent_{A_{k,j}}\mu_{\L_{k-1}}f]\leq\mu[\ent_{C_j}f]. 
\end{equation}
Define $\L_{k,j}=\L_k \cap C_j$. From Lemma \ref{lem:telescope} we have 
\begin{equation}\label{paris40}
\ent_{C_j} f= \sum_{k=1}^n \mu_{C_j}\left[\ent_{\L_{k,j}}\mu_{\L_{k-1,j}}f\right].
\end{equation}
For each $j,k$ fixed, $\mu_{\L_{k,j}}$ is a product of $\mu_{A_{i,j}}$, $i=1,\dots,k$. Hence, 
\begin{equation}\label{paris41}
\ent_{C_j} f= \sum_{k=1}^n \mu_{C_j}\left[\ent_{A_{k,j}}\mu_{\L_{k-1,j}}f\right].
\end{equation}
Therefore, \eqref{paris7} follows if we show that
all $j,k$ fixed:
\begin{equation}\label{paris42}
\mu[\ent_{A_{k,j}}\mu_{\L_{k-1}}f]\leq  \mu\left[\ent_{A_{k,j}}\mu_{\L_{k-1,j}}f\right].
\end{equation}
To prove \eqref{paris42}, notice that  
\begin{equation}\label{paris43}
\mu_{A_{k,j}}\mu_{\L_{k-1}}f=\mu_{A_{k,j}}\mu_{\L_{k-1}}\mu_{\L_{k-1,j}}f = \mu_{\L_{k-1}}\mu_{A_{k,j}}\mu_{\L_{k-1,j}}f,
\end{equation}
where the second identity follows from the product structure $\mu_{\L_k}=\otimes_{i=1}^k\mu_{R_i}$.
Therefore,
\begin{align}\label{paris44}
\mu\left[\ent_{A_{k,j}}\mu_{\L_{k-1}}f\right]
&= 
\mu\left[\mu_{\L_{k-1}}f\log\left(\mu_{\L_{k-1}}f/\mu_{A_{k,j}}\mu_{\L_{k-1}}f\right)\right]
\\ &=
\mu\left[\mu_{\L_{k-1}}\mu_{\L_{k-1,j}}f\log\left(\mu_{\L_{k-1}}\mu_{\L_{k-1,j}}f/\mu_{\L_{k-1}}\mu_{A_{k,j}}\mu_{\L_{k-1,j}}f
\right)\right]
\\ &=
\mu\left[\mu_{\L_{k-1,j}}f\log\left(\mu_{\L_{k-1}}\mu_{\L_{k-1,j}}f/\mu_{A_{k,j}}\mu_{\L_{k-1}}\mu_{\L_{k-1,j}}f
\right)\right]
\\ &\leq
\mu\left[\mu_{A_{k,j}}\left(\mu_{\L_{k-1,j}}f\log\left(\mu_{\L_{k-1,j}}f/\mu_{A_{k,j}}\mu_{\L_{k-1,j}}f
\right)\right)\right]
\\ &=
\mu\left[\ent_{A_{k,j}}\mu_{\L_{k-1,j}}f\right],
\end{align}
where the inequality follows from the variational principle
\begin{equation}\label{varprin}
\ent_{\,\,\!U}(g) = \sup\{\mu_{\,\,\!U} (g h) \,,\; \mu_{\,\,\!U}(e^h)\leq 1\},
\end{equation}
valid for any region $U$, any boundary condition on $U^c$,  and any function $g\geq0$.
\end{proof}

\bigskip

Here is an example to keep in mind, with $n$ arbitrary and $m=2$. Let $\{R_1,\dots,R_n\}$
denote a collection of subsets $R_i\in\bbF$ with $d(R_i,R_j)>1$ for all $i\neq j$. 
Let $A_{i,1}=ER_i$ be the even sites in $R_i$ and  $A_{i,2}=OR_i$ be the odd sites in $R_i$, where a vertex $x\in\bbZ^d$ is even or odd according to the parity of $\sum_{i=1}^dx_i$.
 Lemma \ref{lem:parislemma} says that if we can factorize the even and odd sites on 
each $R_i$ with some constant $s_i$, then we can also factorize, with the constant $\max_i s_i$, the even and odd sites on all $\L=\cup_i R_i$. In this example, one has $A_{i,j}\cap A_{i,k}=\emptyset$ if $k\neq j$, so in particular $C_j\cap C_k=\emptyset$ for $k\neq j$, but it is interesting to note that this need not be the case in Lemma \ref{lem:parislemma}, that is each ``row'' $R_i$ is allowed to be decomposed into arbitrary, possibly overlapping subsets $A_{i,j}$, $j=1,\dots,m$. We refer to Remark \ref{rem:strength} for useful  applications of  the latter  situation.

\subsection{Two block factorizations}
We shall need the following versions of an inequality of Cesi \cite{Cesi}.
\begin{lemma}\label{lem:entBA}
Take $A,B\in\bbF$ and $V=A\cup B$. Suppose that for some $\e\in(0,1)$:
\begin{gather}
\|\mu_B\mu_A g - \mu g\|_\infty \leq \e\, \mu (|g|)\,\label{Cesia2}
\end{gather}
for all functions $g\in L^1(\mu)$.
Then, for all functions $f\geq 0$,
\begin{align}\label{Cesi1}
&\ent f\leq \mu[\ent_{A}f + \ent_{B} f] + \theta(\e)\,\ent f,
\\ &
\label{Cesi2}
\ent f\leq \mu[\ent_{A}f + \ent_{B}\mu_A f] + \theta(\e)\,\ent 
\,\mu_Af,
\end{align}
where $\theta(\e)=84\e(1-\e)^{-2}$. 
\end{lemma}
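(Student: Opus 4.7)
The plan is to reduce both inequalities to a single core estimate on $\ent\mu_B\mu_A f$. Iterating the decomposition $\ent g = \mu[\ent_\L g] + \ent\mu_\L g$ of Lemma~\ref{lem:telescope} gives
\[
\ent f \;=\; \mu[\ent_A f] + \mu[\ent_B\mu_A f] + \ent\mu_B\mu_A f,
\]
so that \eqref{Cesi2} is equivalent to the cleaner statement $\ent\mu_B\mu_A f \leq \theta(\e)\,\ent\mu_A f$, while \eqref{Cesi1}, after rewriting $\mu[\ent_A f + \ent_B f] = 2\ent f - \ent\mu_A f - \ent\mu_B f$, is equivalent to the symmetric statement $\ent\mu_A f + \ent\mu_B f \leq (1+\theta(\e))\,\ent f$.

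The heart of the argument is the core estimate $\ent\mu_B\mu_A g \leq \theta(\e)\,\ent g$ for $g\in\{f,\mu_A f\}$. I would apply hypothesis \eqref{Cesia2} to $g$ itself to obtain $\|\mu_B\mu_A g - \mu g\|_\infty\leq \e\,\mu g$, and separately to the mean-zero function $g-\mu g$ to obtain the sharper pointwise bound $|\mu_B\mu_A g - \mu g|\leq\e\,\mu|g-\mu g|$. Writing $\mu_B\mu_A g = \mu g\,(1+\delta)$ with $\mu\delta=0$ and $\|\delta\|_\infty\leq\e$, the elementary Taylor-type inequality $(1+x)\log(1+x) - x \leq x^2/(2(1-|x|))$ valid for $|x|<1$ yields
\[
\ent\mu_B\mu_A g \;\leq\; \frac{\mu g\cdot\mu[\delta^2]}{2(1-\e)}.
\]
Combining the pointwise control of $|\delta|$ with the Csisz\'ar--Kullback--Pinsker inequality $(\mu|g-\mu g|)^2\leq 2\mu g\cdot\ent g$ produces the desired bound, and \eqref{Cesi2} follows at once upon taking $g=\mu_A f$. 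For \eqref{Cesi1} I would run the $A\leftrightarrow B$ symmetric version of the same argument (yielding also $\ent\mu_A\mu_B f\leq\theta(\e)\,\ent f$), expand $\ent\mu_A f + \ent\mu_B f$ via the two symmetric three-term decompositions, and control the residual cross-term $\mu[\ent_B\mu_A f + \ent_A\mu_B f]$ by $\ent f$ up to a further $O(\theta(\e))\,\ent f$ correction. For this last bound I would use the variational principle \eqref{varprin} in the spirit of the proof of Lemma~\ref{lem:parislemma}, together with the self-adjointness of $\mu_A$ and $\mu_B$ on $L^2(\mu)$ and the Jensen bound $\mu(e^{\mu_A\phi})\leq\mu(e^{\phi})$.

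The main technical obstacle is precisely this last step for \eqref{Cesi1}. A naive attempt to derive \eqref{Cesi1} from \eqref{Cesi2} by the data-processing-style inequality $\mu[\ent_B\mu_A f]\leq \mu[\ent_B f]$ fails, since one can exhibit simple Ising examples where the inequality goes the wrong way. Handling the resulting asymmetry uniformly in $\e\in(0,1)$ is what forces the conservative constant $\theta(\e) = 84\e(1-\e)^{-2}$, rather than the quadratic-in-$\e$ bound that the core estimate alone would suggest.
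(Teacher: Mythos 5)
Your treatment of \eqref{Cesi2} is correct and is genuinely different from the paper's: the paper simply writes the exact decomposition $\ent f=\mu[\ent_A f]+\mu[\ent_B\mu_Af]+\mu[\mu_Af\log(\mu_B\mu_Af/\mu f)]$ and then cites Cesi's inequality (3.2) for the last term, whereas you prove the needed estimate from scratch. Your core estimate is sound: with $g=\mu_Af$ one has $\mu_B\mu_Ag=\mu_B\mu_Af$ and the remainder in the three-term decomposition is exactly $\ent\mu_B\mu_Ag$; writing $\mu_B\mu_Ag=\mu g(1+\delta)$ with $\mu\delta=0$, using \eqref{Cesia2} on $g-\mu g$ to get $\|\delta\|_\infty\le\e\,\mu|g-\mu g|/\mu g$, and combining the Taylor bound with Pinsker gives $\ent\mu_B\mu_Ag\le\e^2(1-\e)^{-1}\ent g$, which is even better than $\theta(\e)$. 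This is a nice self-contained replacement for the citation.

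The proof of \eqref{Cesi1}, however, has a genuine gap, and it is exactly at the step you flag as "the main technical obstacle." Your reduction is circular: since $\mu[\ent_B\mu_Af]=\ent\mu_Af-\ent\mu_B\mu_Af$ and $\mu[\ent_A\mu_Bf]=\ent\mu_Bf-\ent\mu_A\mu_Bf$, the cross-term you propose to control equals $\ent\mu_Af+\ent\mu_Bf$ minus terms of size $O(\e^2)\ent f$; so bounding the cross-term by $(1+O(\theta(\e)))\ent f$ \emph{is} the statement $\ent\mu_Af+\ent\mu_Bf\le(1+\theta(\e))\ent f$ you set out to prove, and no actual argument for it is given beyond naming the tools. (There is also a smaller unaddressed point: the hypothesis \eqref{Cesia2} is stated only for $\mu_B\mu_A$, so the "symmetric version" $\ent\mu_A\mu_Bf\le\theta(\e)\ent f$ needs the duality remark that the $L^1\to L^\infty$ norm of $\mu_A\mu_B-\mu$ equals that of its adjoint $\mu_B\mu_A-\mu$.) The tools you name do suffice, but applied differently: the dual form of \eqref{varprin} with the tilt $h=\log(\mu_Af/\mu f)$ gives $\ent\mu_Af=\mu[f\log(\mu_Af/\mu f)]\le\mu[\ent_Bf]+\mu[f\log(\mu_B\mu_Af/\mu f)]$, and the last term is bounded by $2\e\,\ent f$ by the same pointwise-perturbation-plus-Pinsker argument you already used (via $\log(1+\delta)\le\delta$ and $\mu[f\delta]=\mu[(f-\mu f)\delta]$); combined with $\ent f=\mu[\ent_Af]+\ent\mu_Af$ this yields \eqref{Cesi1}. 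As written, though, that step is missing, and it is the only part of the lemma that the decomposition identities do not hand you for free.
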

\begin{proof}
The inequality \eqref{Cesi1} coincides with \cite[Eq. (2.10)]{Cesi}. To prove 
\eqref{Cesi2} we use essentially the same argument.
 As in the proof of \eqref{Cesi1} we may restrict to the case where $f$ is bounded, and bounded away from zero. Then 
\begin{align*}
\ent f& 
=  \mu\left[f\log \left(f/\mu_A f\right)\right] +  
\mu\left[f\log \left(\mu_A f/\mu f\right)\right]
\\& 
= \mu[\ent_{A}f] + \mu\left[\mu_A f\log \left(\mu_A f/\mu f\right)\right]
\\& 
= \mu[\ent_{A}f] + \mu[\ent_{B}\mu_A f] + 
\mu\left[\mu_Af\log \left(\mu_B\mu_A f/\mu f\right)\right].
\end{align*}
Cesi's inequality \cite[Eq. (3.2)]{Cesi} says that the assumption \eqref{Cesia2} implies 
\begin{equation}\label{Cesi11}
\mu\left[f\log \left(\mu_B\mu_A f/\mu f\right)\right]\leq \theta(\e)\,\ent f\,,
\end{equation}
for all $f\geq 0$, where $\theta(\e)=84\e(1-\e)^{-2}$. Therefore, the claim \eqref{Cesi2} follows from \eqref{Cesi11} 
applied with $\mu_Af$ in place of $f$.
\end{proof}

\begin{remark}\label{rem:variation}
If $\mu$ is a product measure over $A,B$, that is  $\mu=\mu_B\mu_A$, then one can take $\e=0$ in Lemma \ref{lem:entBA}.
In this case \eqref{Cesi2} is actually an identity. In this sense \eqref{Cesi2} might be considered to be tighter than \eqref{Cesi1}, although it is not true that $ \mu[\ent_{B}\mu_A f]\leq  \mu[\ent_{B} f]$ in the general non-product case: think for instance of some $f$ which depends only on $A\setminus B$; in this case $ \mu[\ent_{B} f]=0$ while it is possible that $\mu[\ent_{B}\mu_A f]>0$. For our purposes below it will be crucial to use both \eqref{Cesi1} and \eqref{Cesi2}.
\end{remark}

\begin{remark}\label{rem:strength}
To appreciate the strength of the tensorization Lemma \ref{lem:parislemma}, consider a case where $V=\cup_{i=1}^nR_i$ with $R_i=A_i\cup B_i$ and suppose that $\mu_V$ is a product measure over the $R_i$'s. If the condition \eqref{Cesia2} holds for every pair $A_i,B_i$, $i=1,\dots,n$, with the same constant $\e\in(0,1)$, the combination of Lemma \ref{lem:entBA} and Lemma \ref{lem:parislemma} shows that 
\eqref{Cesi1} holds uniformly in $n$, with $A=\cup_{i=1}^nA_i$ and $B=\cup_{i=1}^n B_i$. On the other hand,  Lemma \ref{lem:entBA} alone cannot yield such a uniform estimate. Indeed, the assumption \eqref{Cesia2} does not tensorize: it is not hard to construct examples where \eqref{Cesia2} holds for every pair $A_i,B_i$, $i=1,\dots,n$, with the same error $\e\in(0,1)$, but one has to take the error proportional to $n $ in order to have \eqref{Cesia2} for $A=\cup_{i=1}^nA_i$ and $B=\cup_{i=1}^n B_i$. 
\end{remark}

\section{Proof of the main results}\label{sec:proof}
We first reduce the general block factorization problem to the factorization into even and odd sites only.
\subsection{Reduction to even and odd blocks}\label{sec:reduction}
We partition the vertices of $\bbZ^d$ into even sites and odd sites, where $x$ is {\em even} if $\sum_{i=1}^dx_i$ is an even integer, while $x$ is  {\em odd} if $\sum_{i=1}^dx_i$ is an odd integer.
Given a set of vertices $V\in\bbF$ we write $EV$ for the set of even vertices $x\in V$ and $OV$ for the set of odd vertices $x\in V$. Whenever possible we simply write $E$ for $EV$ and $O$ for $OV$. Notice that both $\mu_E$ and $\mu_O$ are {\em product measures}. 

The reduction to even and odd blocks can be stated as follows. As usual we assume that a region $V\in\bbF$, and a boundary condition $\t\in\O_{V^c}$ have been fixed, and we use the shorthand notation \eqref{short}.  
\begin{proposition}\label{prop:reduction}
Suppose that for some constant $C>0$ and some function $f\geq 0$, 
\begin{equation}\label{eofacto}
\ent f\leq C\,\mu\left[\ent_E f + \ent_O f\right].
\end{equation}
Then, for the same $C$ and $f$, for all nonnegative weights $\a=\{\a_A,\,A\subset V\}$,
\begin{equation}\label{eofacto1}
\g(\a)\,\ent f\leq 2\,C\sum_{A\subset V}\a_A
\, 
\mu\left[\ent_A f\right]
,
\end{equation}
where $\g(\a)=\min_{x\in V}\sum_{A: A\ni x}\a_A$. 
\end{proposition}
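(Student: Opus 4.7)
The strategy is to deduce \eqref{eofacto1} from \eqref{eofacto} by invoking Shearer's inequality on the two auxiliary measures $\mu_E$ and $\mu_O$, both of which are product measures. The crucial observation is that any two vertices of $E$ (respectively $O$) are at graph distance at least $2$, so once we condition on the spins outside $E$ (resp.\ outside $O$), the Hamiltonian becomes a sum of single-site potentials. Thus, for every boundary condition $\eta$, $\mu_E^\eta=\bigotimes_{x\in E}\mu_{\{x\}}^\eta$ and similarly for $\mu_O^\eta$. In the product case, the block factorization of entropy is just the classical Shearer inequality with constant $1$, as recalled in the introduction (see \cite{CMT}): for any nonnegative weights $\{\beta_B\}_{B\subset E}$,
\begin{equation}\label{plan-shearer}
\Big(\min_{x\in E}\sum_{B\ni x}\beta_B\Big)\,\ent_E g\leq \sum_{B\subset E}\beta_B\,\mu_E[\ent_B g],
\end{equation}
for all $g\geq 0$, and the same holds with $E$ replaced by $O$.

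\textbf{Choice of induced weights.} Given arbitrary weights $\a=\{\a_A,\,A\subset V\}$, define induced weights on subsets $B\subset E$ by
$$\beta_B\;:=\;\sum_{A\subset V:\,EA=B}\a_A,$$
where $EA$ is the set of even sites of $A$, and analogously define weights $\beta'_B$ on subsets $B\subset O$. For any even vertex $x\in E$,
$$\sum_{B\subset E,\,B\ni x}\beta_B\;=\;\sum_{A\subset V:\,x\in EA}\a_A\;=\;\sum_{A\ni x}\a_A\;\geq\;\g(\a),$$
and symmetrically for odd vertices. Applying \eqref{plan-shearer} with these weights and integrating against $\mu$, I obtain
\begin{align}
\g(\a)\,\mu[\ent_E f] &\leq \sum_{A\subset V}\a_A\,\mu[\ent_{EA}f],\\
\g(\a)\,\mu[\ent_O f] &\leq \sum_{A\subset V}\a_A\,\mu[\ent_{OA}f].
\end{align}

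\textbf{Reducing $EA,OA$ back to $A$.} Since $EA\subset A$, the decomposition \eqref{deco} of Lemma \ref{lem:telescope} applied inside $\mu_A$ gives
$$\mu[\ent_A f]\;=\;\mu[\ent_{EA}f]+\mu[\ent_A \mu_{EA} f]\;\geq\;\mu[\ent_{EA} f],$$
and likewise $\mu[\ent_{OA}f]\leq \mu[\ent_A f]$. Combining the two displays,
\begin{equation}
\g(\a)\,\mu[\ent_E f+\ent_O f]\;\leq\;2\sum_{A\subset V}\a_A\,\mu[\ent_A f].
\end{equation}

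\textbf{Conclusion.} Multiplying the hypothesis \eqref{eofacto} by $\g(\a)$ and substituting the bound above yields \eqref{eofacto1} with the stated constant $2C$. No step should present any real difficulty: the only ingredient beyond Lemma \ref{lem:telescope} and the hypothesis is the standard Shearer inequality for product measures, and the mild combinatorial identity verifying that the induced weights $\beta_B$ preserve the parameter $\g(\a)$ when restricted to $E$ or $O$.
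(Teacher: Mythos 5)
Your proposal is correct and follows essentially the same route as the paper: Shearer's inequality for the product measures $\mu_E$ and $\mu_O$ with the induced weights $\beta_B=\sum_{A:\,EA=B}\a_A$ (the paper's $\hat\a_U$), the observation that these weights preserve $\g(\a)$ on even (resp.\ odd) vertices, the monotonicity $\mu[\ent_{EA}f]\leq\mu[\ent_A f]$, and substitution into the even/odd hypothesis. The only cosmetic difference is that you justify the monotonicity via the decomposition of Lemma \ref{lem:telescope}, which the paper leaves implicit.
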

Proposition \ref{prop:reduction} is a direct consequence of the following version of Shearer's inequality satisfied by the relative entropy functional of any product measure. 
\begin{lemma}\label{lem:shearer}
Fix $\L\subset V\in \bbF$ and suppose that $\mu_\L$ is a product measure on $\O_\L$. Then, 
for any choice of nonnegative weights $ \a=\{ \a_A, A\subset \L\}$ and any function $f\geq 0$:
\begin{equation}\label{shearer}
\g( \a)\,\ent_\L f\leq \sum_{A\subset \L}\a_A\, \mu_\L\!\left[\ent_A f\right],
\end{equation}
where $\g( \a)=\min_{x\in \L}\sum_{A: A\ni x} \a_A$. 
\end{lemma}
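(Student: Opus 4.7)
The plan is to reduce the weighted inequality to the standard ordered chain rule for product measures, by combining a careful chain-rule expansion of $\ent_A f$ with a convexity-based monotonicity. First I would fix an arbitrary ordering $x_1,\dots,x_n$ of the vertices of $\L$, and set $\L_i=\{x_1,\dots,x_i\}$, with $\L_0=\emptyset$. For every $A\subset\L$, applying Lemma~\ref{lem:telescope} to the sequence $A\cap\L_0\subset A\cap\L_1\subset\cdots\subset A\cap\L_n=A$, together with the fact that, because $\mu_A$ is a product measure, $\ent_{B\cup\{x\}}g=\ent_x g$ whenever $g$ does not depend on $\sigma_B$, yields the chain-rule identity
\begin{equation}\label{chain-ord}
\mu_\L[\ent_A f]=\sum_{i:\,x_i\in A}\mu_\L\!\left[\ent_{x_i}\mu_{A\cap\L_{i-1}} f\right].
\end{equation}
Specialising \eqref{chain-ord} to $A=\L$ recovers the standard ordered decomposition
\[
\ent_\L f=\sum_{i=1}^n\mu_\L\!\left[\ent_{x_i}\mu_{\L_{i-1}}f\right].
\]

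The next step, and the crux of the argument, is to lower-bound each summand in \eqref{chain-ord} by the corresponding term of this ordered decomposition, thereby removing the dependence on $A$. Since relative entropy is jointly convex, for any $B\subset B'\subset\L$ with $x_i\notin B'$, Jensen's inequality gives
\[
\ent_{x_i}\mu_{B'}f=\ent_{x_i}\mu_{B'\setminus B}(\mu_{B} f)\le\mu_{B'\setminus B}\ent_{x_i}\mu_{B}f,
\]
so that, taking $\mu_\L$-expectation and using DLR,
\begin{equation}\label{mono-x}
\mu_\L[\ent_{x_i}\mu_{B'}f]\le\mu_\L[\ent_{x_i}\mu_{B}f].
\end{equation}
Applying \eqref{mono-x} with $B=A\cap\L_{i-1}$ and $B'=\L_{i-1}$ shows that every term of \eqref{chain-ord} dominates $\mu_\L[\ent_{x_i}\mu_{\L_{i-1}}f]$.

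Finally, I would assemble the inequality: multiplying \eqref{chain-ord} by $\a_A$, summing over $A$, interchanging the order of summation, and invoking the previous bound,
\[
\sum_{A\subset\L}\a_A\,\mu_\L[\ent_A f]\ge\sum_{i=1}^n\Big(\sum_{A\ni x_i}\a_A\Big)\,\mu_\L\!\left[\ent_{x_i}\mu_{\L_{i-1}}f\right]\ge\g(\a)\sum_{i=1}^n\mu_\L\!\left[\ent_{x_i}\mu_{\L_{i-1}}f\right]=\g(\a)\,\ent_\L f.
\]
The elementary-looking chain rule \eqref{chain-ord} hides the only delicate point, namely the convexity-based monotonicity \eqref{mono-x}: this is the only place where the product structure of $\mu_\L$ enters in a non-trivial way, and it is precisely the ingredient that makes the Shearer-type inequality tight (consistent with $C=1$ in the tensorization of a product measure).
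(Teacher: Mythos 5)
Your proof is correct, but it takes a genuinely different route from the paper's. The paper disposes of this lemma by citation: it invokes \cite[Proposition 2.6]{CMT}, which reduces the relative-entropy statement for a product measure to a weighted version of Shearer's inequality for \emph{Shannon} entropy, and then refers to \cite[Theorem 6.2]{virag_exp} for the latter. You instead give a direct, self-contained argument entirely at the level of relative entropy: the ordered chain rule \eqref{chain-ord} obtained from Lemma \ref{lem:telescope} together with the product structure, followed by the ``conditioning increases the entropy term'' monotonicity \eqref{mono-x} coming from convexity of $g\mapsto\ent_\nu(g)$ (via the variational principle \eqref{varprin} or Jensen), and finally the interchange of sums and the definition of $\g(\a)$. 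This is the classical proof scheme for Shearer-type inequalities transplanted to the relative-entropy functional, and all steps check out: \eqref{chain-ord} is legitimate because for a product measure $\ent_{B\cup\{x\}}g=\ent_x g$ when $g$ is independent of $\si_B$, and \eqref{mono-x} is legitimate because $\mu_{x_i}$ does not depend on the spins in $B'\setminus B$ (this is where the product hypothesis is essential; for a non-product $\mu_\L$ the single-site kernel $\mu_{x_i}$ would depend on the averaged variables and the plain Jensen step would not apply in this form). What your approach buys is independence from the Shannon-entropy literature and a proof that uses only tools already present in the paper (Lemma \ref{lem:telescope} and \eqref{varprin}); what the paper's citation buys is brevity. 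One small caveat: the product structure enters not only in \eqref{mono-x} but also in establishing \eqref{chain-ord} itself and in the identity $\mu_{B'}f=\mu_{B'\setminus B}\mu_B f$, so your closing remark slightly understates where the hypothesis is used, but this does not affect correctness.
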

\begin{proof}
As in \cite[Proposition 2.6]{CMT}, the inequality \eqref{shearer} follows from a weighted version of Shearer's inequality for Shannon entropy. For a proof of the latter we refer e.g.\ to \cite[Theorem 6.2]{virag_exp}.
\end{proof}
\begin{proof}[Proof of Proposition \ref{prop:reduction}]
Fix a choice of weights $\a=\{\a_A, A\subset V\}$. Since $\mu_E$ is a product measure on $\O_E$, 
we may apply Lemma \ref{lem:shearer} with $\L=E$ and weights $\a$ replaced by $\hat \a=\{\hat \a_U,\,U\subset E\}$, with $\hat \a_U = \sum_{A\subset V}\a_A\ind_{EA=U}$. It follows that
\begin{align}\label{shear10}
&\sum_{A\subset V}\a_A \,\mu_E[\ent_{EA} f]\geq \g_E(\a)\,\ent_E f,
\end{align}
where $\g_E(\a)=\min_{x\in E}\sum_{A: A\ni x}\a_A$.
Similarly, 
\begin{align}\label{shear20}
&\sum_{A\subset V}\a_A \,\mu_O[\ent_{OA} f]\geq \g_O(\a)\,\ent_O f,
\end{align}
with $\g_O(\a)=\min_{x\in O}\sum_{A: A\ni x}\a_A$. Since $\g_E(\a)$ and $\g_O(\a)$  are both at least as large as $\g(\a)$,
the inequality \eqref{eofacto1} follows by summing \eqref{shear10} and \eqref{shear20}, 
taking the expectation with respect to $\mu$ and noting that both $\mu[\ent_{EA} f]$ and $\mu[\ent_{OA} f]$ are at most $\mu[\ent_{A} f]$.
\end{proof}

The rest of this section is concerned with the proof of the factorization into even and odd blocks. Namely, we prove the following theorem, which together with Proposition \ref{prop:reduction} establishes the main result Theorem \ref{th:mainresult}.

\begin{theorem}\label{th:main}
Suppose that the spin system satisfies $SM(K,a)$ for some constants $K,a>0$. Then there exists a constant $C>0$ such that for all $V\in\bbF$, $\t\in \O_{V^c}$, for all $f\geq 0$,
 \begin{equation}\label{EOth1}
\ent_V^\t f \leq C\mu_V^\t\left[\ent_E f + \ent_O f\right].
 \end{equation}
 If instead the spin system satisfies $SM_L(K,a)$ for some constants $K,a>0$, $L\in\bbN$, then the same conclusion \eqref{EOth1} holds, provided we require that $V\in\bbF^{(L)}$. 
\end{theorem}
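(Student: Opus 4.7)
By Proposition \ref{prop:reduction}, it suffices to prove the even/odd factorization \eqref{EOth1}, and I would do so by induction on the side length of a bounding box of $V$. The base case --- $V$ contained in a cube of some fixed side $L_0$ depending on $K,a$ (and on $L$ in the $SM_L$ setup) --- would follow from a direct argument based on the boundedness of the interaction, yielding \eqref{EOth1} with some constant $C_0$.

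For the inductive step I would split $V$ along its longest direction into two overlapping sub-rectangles $V = A \cup B$ with overlap of thickness $\ell$ chosen large enough that $SM(K,a)$ provides the decoupling estimate \eqref{Cesia2} with a desirably small $\e$, and then invoke the sharper form \eqref{Cesi2} of Lemma \ref{lem:entBA}:
\begin{align*}
\ent_V f \leq \mu_V[\ent_A f] + \mu_V[\ent_B \mu_A f] + \theta(\e)\, \ent_V \mu_A f.
\end{align*}
The inductive hypothesis applied on $A$ bounds $\mu_V[\ent_A f]$ by $C'\mu_V[\ent_{EA} f + \ent_{OA} f]$; since $\mu_E$ and $\mu_O$ are product measures, Lemma \ref{lem:telescope} yields the monotonicity $\mu_V[\ent_{EA} f] \leq \mu_V[\ent_E f]$ and likewise for the odd sites, reducing the first term to the desired right-hand side. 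The remaining two terms involve only the averaged function $\mu_A f$, which is insensitive to the variables in $A$, and I would handle them by applying the same decomposition to these averaged entropies --- this is the \emph{two-stage} aspect announced in the introduction. The point is that errors of the form $\theta(\e)\,\ent_V \mu_A f$ are ``one scale lighter'' than $\ent_V f$, via the telescoping identity \eqref{tele}, so that iterating through $O(\log|V|)$ nested scales produces an error which collapses to a bounded multiplicative factor rather than blowing up.

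To close the recursion without paying a constant proportional to the number of slabs produced along the way --- the obstruction highlighted in Remark \ref{rem:strength} --- I would arrange the decomposition so that the overlapping sub-rectangles group into well-separated rows $R_1,\dots,R_n$ with $d(R_i,R_j)>1$, prove the even/odd factorization on each $R_i$ with a row-uniform constant by the recursion above, and then invoke the new tensorization Lemma \ref{lem:parislemma} with $A_{i,1} = ER_i$, $A_{i,2} = OR_i$ and $C_j = \bigcup_i A_{i,j}$ to lift the factorization to $\bigcup_i R_i$ at no cost in the constant. A second pass of the same construction, applied in complementary configurations, then covers the remainder of $V$.

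The main obstacle is the bookkeeping required to make the geometric induction (driven by Lemma \ref{lem:entBA}) compatible with the combinatorial merging (driven by Lemma \ref{lem:parislemma}) across all scales, so that the accumulated errors can genuinely be absorbed on the left-hand side uniformly in $|V|$. This is precisely the role I expect the auxiliary decomposition Theorem \ref{th:maind} to play: to reorganize the entropy so that the nested ``restore a large block'' errors smear across many scales into a single convergent series, rather than doubling the constant at each step as a naive iteration of \eqref{Cesi1} would do.
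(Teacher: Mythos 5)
Your outline assembles the right ingredients (the two--block estimate of Lemma \ref{lem:entBA}, Lemma \ref{lem:parislemma}, a multi-scale recursion), but the way you close the inductive step has a genuine gap, and it is exactly the gap the paper is built to avoid. After applying the inductive hypothesis on $A$ and on $B$ you propose to reduce both $\mu_V[\ent_{EA}f+\ent_{OA}f]$ and the corresponding $B$-terms to $\mu_V[\ent_E f+\ent_O f]$ by monotonicity. That doubles the constant at every scale: with $O(\log|V|)$ scales you get $C\sim|V|^{O(1)}$, not a uniform constant. The paper instead uses the \emph{exact} recombination identity $\ent_E f=\mu_E[\ent_{EA}f+\ent_{EB}\mu_{EA}f]$ (Lemma \ref{EOlemma1}), so no constant is lost; the price is the discrepancy $\mu[\ent_{EB}\mu_{A}f-\ent_{EB}\mu_{EA}f]$, which is the true obstruction (not the $\theta(\e)\,\ent\mu_Af$ term, which is harmless because $\e$ is exponentially small in the overlap). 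Since $A\cap B\neq\emptyset$, this discrepancy cannot be killed by convexity, and Lemma \ref{lem:parislemma} does not help here either --- it requires the rows to be at distance $>1$, whereas $A$ and $B$ must overlap for the mixing estimate to hold. The paper's resolution, absent from your proposal, is to average over $r\sim\ell_k$ positions of the cut and to show (Lemma \ref{lem:anyD}) that the \emph{sum} of the discrepancies telescopes, via \eqref{tele}, to at most $\ent f$, so the averaged error is $\ent f/r$.

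Even with that fixed, the recursion $\d(k)\geq(1-10/(\ell_k\d(k-1)))\d(k-1)$ of Theorem \ref{th:maind} does not close by itself, because the error is divided by $\d(k-1)$, which could a priori decay fast. You attribute to Theorem \ref{th:maind} the role of making the errors summable, but the paper is explicit that it is \emph{not sufficient}: one needs the independent a priori bound $\d(k)\geq\ell_k^{-\e}$ of Theorem \ref{th:toy}, proved by a second, structurally different decomposition of $V$ into two unions of \emph{well-separated} blocks --- this is where Lemma \ref{lem:parislemma} actually enters, together with a willingness to lose a constant factor per (much larger) scale jump. The two recursions are then combined: the crude polynomial bound guarantees $\ell_k\d(k-1)\gg1$, which makes the refined recursion contract to $\inf_k\d(k)>0$. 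Your single-recursion plan, as written, would either blow up the constant (monotonicity on both halves) or leave the discrepancy term and the closing of the recursion uncontrolled.
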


\subsection{Proof of Theorem \ref{th:main}}
The overall idea is to follow a recursive strategy based on a geometric construction introduced in \cite{BertiniCancriniCesi}, see also \cite{Cesi}. However, contrary to the problems studied in \cite{BertiniCancriniCesi,Cesi}, the error terms produced at each step of the iteration are too large in our setting to obtain directly the desired conclusion, see Theorem \ref{th:maind}, and we will need an additional recursive argument to finish the proof, see Theorem \ref{th:toy}.   We first carry out the proof under the spatial mixing assumption $SM(K,a)$, and then, in the end, consider the relaxed assumption $SM_L(K,a)$. 

\begin{definition}\label{def:dk}
Set $\ell_k=(3/2)^{k/d}$ and let $\bbF_k$ denote the set of all subsets $V\in \bbF$ such that, up to translation and permutation of the coordinates, $V$ is contained in the rectangle $$[0,\ell_{k+1}]\times\dots\times [0,\ell_{k+d}].$$
Let  $\d(k)$ denote the largest constant $\d>0$ such that 
 \begin{equation}\label{EOth10}
\d\,\ent_V^\t f \leq \mu_V^\t\left[\ent_E f + \ent_O f\right]
 \end{equation}
holds for all $V\in\bbF_k$, $\t\in\O_{V^c}$, and all $f:\O_V\mapsto\bbR_+$. 
\end{definition}
Note that $\d(k)\leq 1$ for any $k\in\bbN$ since  if e.g.\ $f=f(\si_E)$ is a function depending only on the spins at even sites then the right hand side in  \eqref{EOth10} is equal to $\mu_V^\t\left[\ent_E f\right]\leq \ent_V^\t f$. On the other hand, the next lemma guarantees that it is positive for all $k\in\bbN$. 
\begin{lemma}\label{lem:base}
For every $k\in\bbN$, $\d(k)>0$. 
\end{lemma}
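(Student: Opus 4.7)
The plan is to prove the base case via a Holley--Stroock type comparison with the a priori product measure $\nu_V=\otimes_{x\in V}\nu$, exploiting the fact that every $V\in\bbF_k$ has cardinality bounded by some $N_k$ depending only on $k$.

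Since $U$ and $W$ are bounded, the Hamiltonian satisfies the uniform bound $\|H_V^\t\|_\infty\leq c_k$ for a constant $c_k$ depending only on $k$ and the potentials. Consequently the Radon--Nikodym density $\rho=d\mu_V^\t/d\nu_V$ satisfies $e^{-2c_k}\leq\rho\leq e^{2c_k}$, and the same two-sided bound holds for every conditional density $d\mu_A^{\sigma,\t}/d\nu_A$, $A\subset V$, since the relevant sub-Hamiltonian is again bounded by $c_k$. Set $B=e^{2c_k}$ and let $\Ent_\nu(g)=\nu_V[g\log(g/\nu_V g)]$, and similarly $\Ent_{\nu,A}(g)$ for the conditional entropy of $g$ with respect to the product $\nu_A$ (a function of the complementary spins).

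Since $\nu_V$ is a product measure, Shearer's inequality (Lemma~\ref{lem:shearer}) with $\a_E=\a_O=1$ yields $\g(\a)=1$ and
\begin{equation*}
\Ent_\nu(f)\leq\nu_V\!\left[\Ent_{\nu,E}(f)+\Ent_{\nu,O}(f)\right].
\end{equation*}
The Holley--Stroock comparison, via the variational formula $\ent_\mu g=\inf_{c>0}\mu[g\log(g/c)-g+c]$ combined with the density bounds, gives three inequalities: (a) $\ent_V^\t f\leq B\,\Ent_\nu(f)$; (b) $\ent_A f\geq B^{-1}\Ent_{\nu,A}(f)$ pointwise in the complementary variable, for $A\in\{E,O\}$; and (c) $\mu_V^\t[g]\geq B^{-1}\nu_V[g]$ for all $g\geq0$. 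Chaining these with Shearer's inequality, I obtain
\begin{equation*}
\ent_V^\t f\leq B\,\nu_V\!\left[\Ent_{\nu,E}(f)+\Ent_{\nu,O}(f)\right]\leq B^3\,\mu_V^\t\!\left[\ent_E f+\ent_O f\right],
\end{equation*}
so $\d(k)\geq B^{-3}>0$.

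The main technical issue is the permissive case with hard constraints, where $d\mu_V^\t/d\nu_V$ vanishes on forbidden configurations and the direct comparison with $\nu_V$ breaks down in one direction. To handle this I would exploit the nearest-neighbor structure: since there are no edges within $E$ (respectively within $O$), the conditional $\mu_E^{\sigma_O,\t}$ is automatically a product measure over $E$, with single-site factors whose densities with respect to $\nu$ are bounded on their supports by constants depending only on $k$. One can then carry out the comparison argument directly at the level of these conditional products, using permissiveness and irreducibility of the spin system to guarantee nonvanishing denominators, and obtain the same qualitative conclusion $\d(k)>0$.
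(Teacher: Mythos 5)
Your argument for systems without hard constraints is correct and essentially reproduces, in self-contained form, the perturbation argument the paper invokes by citation (it appeals to \cite{HS} and \cite[Lemma 2.2]{CMT}, which is the same Holley--Stroock comparison with the product reference measure). Your chain of inequalities (a)--(c) is sound: (a) needs only the upper bound on the density, (b) and (c) need the lower bound, all of which follow from $\|H^\t_\L\|_\infty\le c_k$ for every $\L\subset V\in\bbF_k$, and the resulting bound $\d(k)\ge e^{-6c_k}=\exp(-C\ell_k^d)$ matches the paper's. The application of Shearer/tensorization to $\nu_V=\nu_E\otimes\nu_O$ with weights $\a_E=\a_O=1$ is also fine.

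The gap is in the hard-constraint case, and it is not a technicality. The real obstruction there is not ``nonvanishing denominators'' but the \emph{connectivity} of the support of $\mu_V^\t$ under even/odd heat-bath moves. If the even/odd block dynamics were reducible, one could take $f$ to be the indicator of a single communicating class and get $\ent_V^\t f>0$ while $\mu_V^\t[\ent_Ef+\ent_Of]=0$, i.e.\ $\d(k)=0$. So any correct proof must use irreducibility in an essentially dynamical way; no comparison of one-dimensional marginal densities against $\nu$ (or against the conditional product structure of $\mu_E^{\sigma_O}$, which is indeed a product over sites of $E$, but whose \emph{support} still depends on $\sigma_O$ through the hard constraints) can detect whether distinct admissible configurations communicate. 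Your sketch invokes ``permissiveness and irreducibility to guarantee nonvanishing denominators,'' but never explains how irreducibility enters the inequality, and the Holley--Stroock lower bound on the density genuinely fails on forbidden configurations, so steps (b) and (c) break with no evident repair along these lines. The paper closes this case by a different route: it first bounds $\ent f\le C_0\log(1/\mu_*)\var(\sqrt f)$ (with $\mu_*\ge e^{-C|V|}$ by permissiveness), then uses a coupling argument --- this is exactly where irreducibility is consumed --- to get a spectral gap for the even/odd chain, $\var(g)\le C_1\,\mu[\var_E(g)+\var_O(g)]$, and finally applies $\var(\sqrt f)\le\ent f$ to each of $\mu_E,\mu_O$. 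To complete your proof you would need to substitute an argument of this kind (or some other genuinely dynamical use of irreducibility) for the constrained case.
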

\begin{proof}
If the spin system has no hard constraints one can use 
a perturbation argument from \cite{HS}, see e.g.\ \cite[Lemma 2.2]{CMT} for the application to our setting. In particular, one obtains that 
there exists a constant $C>0$ such that for all $k\in\bbN$: $$\d(k)\geq \exp{\left(-\,C\ell_k^d\right)}\,.$$

In the presence of hard constraints, in the case of irreducible permissive systems 
one can argue as follows. 
It is known that any probability measure $\mu$ satisfies 
 \begin{equation}\label{rough1}
\ent f\leq C_0\log (1/\mu_*)\var \left(\sqrt f\right),
 \end{equation}
 with $\mu_* = \min_\si\mu(\si)$, where the minimum is restricted to $\si$ such that $\mu(\si)>0$, and $C_0$ is an absolute constant, see \cite[Corollary A.4]{DS}. Here $\var$ denotes the variance functional of $\mu$. For a finite permissive system in a region $V$ one has $\mu_*\geq e^{-C|V|}$ for some $C>0$ independent of $V$. Moreover, using the irreducibility assumption, a crude coupling argument shows that the spectral gap of the even/odd Markov chain is bounded away from zero in any fixed region $V\in\bbF$, see  \cite[Lemma 5.1]{BCSV}. In other words,  for some constant $C_1=C_1(k)$ one has
  \begin{equation}\label{rough2}
\var \left(g\right)\leq C_1\,\mu\left[\var_E\left(g\right) +\var_O\left(g\right)\right],
 \end{equation}
 for any function $g$. 
 Taking $g=\sqrt f$, the desired conclusion now follows from \eqref{rough1} and \eqref{rough2} using, for both $\mu_E$ and $\mu_O$, the well known inequality $\var(\sqrt f)\leq \ent f$, which holds for any probability measure, see e.g.\ \cite[Lemma 1]{LO}.

\end{proof}

Lemma \ref{lem:base} will be used as the base case for our induction.
\begin{theorem}\label{th:maind}
Assume $SM(K,a)$. There exists a constant $k_0\in\bbN$ depending on $K,a,d$ such that 
  \begin{equation}\label{EOth2}
\d(k)\geq \left(1- \frac{10}{\ell_k\d(k-1)}\right)\d(k-1),\qquad k\geq k_0.
 \end{equation}
\end{theorem}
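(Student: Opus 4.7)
The plan is to establish the recursion via a Cesi-type geometric decomposition of $V$, combined with an inductive application of the definition of $\d(k-1)$ and a delicate product-structure argument for the even/odd sites, designed to avoid a naive factor-$2$ loss. For the \emph{geometric setup}, by permuting coordinates we may assume $V\in\bbF_k$ is contained in a rectangle whose longest side, of length at most $\ell_{k+d}=(3/2)\,\ell_k$, lies along the $d$-th axis. Split $V=A\cup B$ by cutting this axis into two overlapping pieces of extent at most $\ell_k$ each, so that both $A,B\in\bbF_{k-1}$, with overlap of width at least $2\ell_k-\ell_{k+d}=\ell_k/2$. Under $SM(K,a)$ this overlap thickness makes \eqref{Cesia2} hold with some $\e\le C_1 e^{-a'\ell_k}$, hence $\theta(\e)\le C_2 e^{-a'\ell_k}\le 2.5/\ell_k$ for all $k\ge k_0$ with $k_0=k_0(K,a,d)$ large enough. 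Set $D:=B\setminus A\in\bbF_{k-1}$.

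Apply \eqref{Cesi2} and use the identity $\ent(\mu_A f)=\ent f-\mu[\ent_A f]$ from Lemma \ref{lem:telescope}\eqref{deco} to obtain, after rearranging,
\[\ent f\le \mu[\ent_A f] + (1-\theta(\e))^{-1}\mu[\ent_B\mu_A f].\]
By the inductive definition of $\d(k-1)$ applied to $A\in\bbF_{k-1}$ we have $\mu[\ent_A f]\le \d(k-1)^{-1}\mu[\ent_{EA}f+\ent_{OA}f]$. Since $\mu_A f$ depends only on $D=V\setminus A$, one has $\ent_{EB}(\mu_A f)=\ent_{ED}(\mu_A f)$ and the analogue for odd, so the same inductive hypothesis applied to $B\in\bbF_{k-1}$ gives $\mu[\ent_B\mu_A f]\le \d(k-1)^{-1}\mu[\ent_{ED}\mu_A f+\ent_{OD}\mu_A f]$.

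The crux is the transition from $\{\ent_{EA}f,\ent_{ED}\mu_A f\}$ to $\ent_E f$, and the analogous transition for odd. Since $E=EA\sqcup ED$ and even sites are pairwise non-adjacent, $\mu_E=\mu_{EA}\otimes\mu_{ED}$ is a product measure, yielding $\mu[\ent_E f]=\mu[\ent_{EA} f]+\mu[\ent_{ED}(\mu_{EA} f)]$ (and analogously for $O$). The target estimate is
\[\mu[\ent_{ED}(\mu_A f)]\le \mu[\ent_{ED}(\mu_{EA} f)]+O(\theta(\e))\,\ent f,\]
which is established by a Jensen-type argument via the variational principle, in the spirit of the chain \eqref{paris44} in the proof of Lemma \ref{lem:parislemma}. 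The key enabling observation is that although $\mu_A$ and $\mu_{ED}$ do not commute (there are edges between $OA$ and $ED$), the operations $\mu_{EA}$ and $\mu_{ED}$ \emph{do} commute, since both act on even sites, which are pairwise non-adjacent. Via this commutation, a test function of the form $\mu_{EA}h$ may be used to compare the variational representations of $\ent_{ED}(\mu_A f)$ and $\ent_{ED}(\mu_{EA} f)$, with the residual defect absorbed via the spatial mixing estimate and contributing at most $O(\theta(\e))\,\ent f$. Combining all ingredients we obtain $(1-C\theta(\e))\,\ent f\le \d(k-1)^{-1}\mu[\ent_E f+\ent_O f]$, whence $\d(k)\ge \d(k-1)-C\theta(\e)\ge \d(k-1)-10/\ell_k$, i.e., \eqref{EOth2}.

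The hard part is establishing the target estimate, that is, controlling $\mu[\ent_{ED}(\mu_A f)-\ent_{ED}(\mu_{EA} f)]$ by $O(\theta(\e))\,\ent f$. A naive convexity/Jensen bound fails because of the non-commutation of $\mu_A$ and $\mu_{ED}$; passing through the intermediate averaging $\mu_{EA}$, which does commute with $\mu_{ED}$, is essential. This is precisely the scenario targeted by Lemma \ref{lem:parislemma}, whose proof technique (in particular the chain \eqref{paris44}) is what powers the argument here and ensures that the loss per step is only $O(1/\ell_k)$, rather than a constant factor.
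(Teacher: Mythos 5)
Your geometric setup and the first half of the argument (two overlapping blocks $A,B\in\bbF_{k-1}$, Lemma \ref{lem:entBA}, the inductive use of $\d(k-1)$, and the identification of the defect $\mu[\ent_{ED}(\mu_A f)-\ent_{ED}(\mu_{EA}f)]$ as the crux) match the paper's strategy. But the proposal breaks down exactly at the crux: you assert that this defect is $O(\theta(\e))\,\ent f$, "absorbed via the spatial mixing estimate." That bound is false. The defect is not a correlation-decay effect at distance $\ell_k/4$; it is an order-one effect of conditioning at the interface. Concretely (this is the paper's own counterexample, discussed right after \eqref{EA3}): in 1D with $A=\{1,\dots,m\}$, $m$ odd, and $f$ a function of $\si_m$ alone, one has $\mu_{EA}f=f$ and $\ent_{EB}\mu_{EA}f=0$, while $\mu_A f$ is a nontrivial function of $\si_{m+1}\in ED$, so $\ent_{ED}(\mu_A f)>0$ by an amount governed by the nearest-neighbor coupling $U$, not by $e^{-a\ell_k/4}$. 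No variational/commutation trick with $\mu_{EA}$ can rescue this, because the obstruction is precisely that $\mu_A$ injects dependence on the spins just outside $A$; the chain \eqref{paris44} works in Lemma \ref{lem:parislemma} only because there the blocks are genuinely independent under the reference product measure.

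The missing idea is the averaging over many cut positions. The paper constructs $r\sim\ell_k/4$ decompositions $V=A_i\cup B$, with $A_i$ built so that the interface layers $\G_{i+1}=A_{i+1}\setminus A_i$ alternate between purely even and purely odd sets (Lemma \ref{lem:geo}(4)). For even $i$ the defect is $\le 0$ by a conditional-independence argument; for odd $i$ it is bounded by $\mu[\ent_{A_{i+1}}\mu_{A_i}f]$, and these terms \emph{telescope} via Lemma \ref{lem:telescope} to give $\sum_{i=1}^{r}\mu[\ent_{EB}\mu_{A_i}f-\ent_{EB}\mu_{EA_i}f]\le\ent f$ (Lemma \ref{lem:anyD}). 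Averaging over $i$ then yields a per-decomposition error of $\tfrac{2}{r}\,\ent f\sim \tfrac{8}{\ell_k}\ent f$, which is where the $10/(\ell_k\d(k-1))$ in \eqref{EOth2} actually comes from; the spatial-mixing term $\theta(\e_k)$ is exponentially small and negligible by comparison. Your proposal, by attributing the whole $10/\ell_k$ to $\theta(\e)$ and using a single cut, has no mechanism to produce a defect that is small in $\ell_k$, and so cannot close the recursion.
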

Theorem \ref{th:maind} can
only be useful if we know that $\d(k)$ is much larger than $1/\ell_k$ for $k$ large enough,
and thus it is not sufficient to prove Theorem \ref{th:main}. The next result allows us to have an independent control on $\d(k)$ which, together with Theorem \ref{th:maind} implies the desired uniform bound of Theorem \ref{th:main}.
\begin{theorem}\label{th:toy}
Assume $SM(K,a)$. For any $\e>0$, there exists a constant $k_0\in\bbN$ depending on $K,a,d,\e$, such that 
\begin{equation}\label{toy1}
\d(k)\geq \ell_k^{-\e},\qquad k\geq k_0.
 \end{equation}
\end{theorem}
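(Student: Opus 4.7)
The plan is to prove Theorem~\ref{th:toy} by induction on $k$, with the recursion of Theorem~\ref{th:maind} providing the inductive step. The inductive hypothesis is $\d(k-1) \geq \ell_{k-1}^{-\e}$, and the goal is to deduce $\d(k) \geq \ell_k^{-\e}$ for every $k$ past some threshold depending on $\e$.

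For the \emph{propagation step} I would apply Theorem~\ref{th:maind} in its additive form $\d(k) \geq \d(k-1) - 10/\ell_k$. Setting $c = (3/2)^{1/d}$ so that $\ell_k/\ell_{k-1} = c$, the inductive hypothesis gives
\begin{equation}
\d(k) \,\geq\, \ell_{k-1}^{-\e} - 10/\ell_k \,=\, \ell_k^{-\e}\bigl(c^{\e} - 10\,\ell_k^{\e-1}\bigr).
\end{equation}
For any $\e \in (0,1)$ the constant $c^{\e}$ is strictly greater than $1$ while $\ell_k^{\e-1} \to 0$ as $k \to \infty$. Hence, for every $k$ past an explicit threshold $k_1 = k_1(\e)$ (characterised by $\ell_{k_1}^{1-\e} \geq 10/(c^\e - 1)$) the bracket is at least $1$ and the inductive bound propagates. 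Note that this argument is sharp in the sense that it breaks down exactly at $\e = 1$: the geometric gain from $\ell_{k-1}$ to $\ell_k$ no longer dominates the additive loss $10/\ell_k$.

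The \emph{base case} is the main technical obstacle: one must exhibit at least one scale $k_* \geq k_1(\e)$ for which $\d(k_*) \geq \ell_{k_*}^{-\e}$. Lemma~\ref{lem:base} supplies only $\d(k_*) > 0$, and the lower bound obtained there can be super-exponentially small in $\ell_{k_*}^{d}$. A natural way to bridge the gap is to iterate Theorem~\ref{th:maind}'s additive recursion from the threshold $k_{00}$ where it first applies,
\begin{equation}
\d(k) \,\geq\, \d(k_{00}) - 10\sum_{j = k_{00}+1}^{k}\ell_j^{-1} \,\geq\, \d(k_{00}) - O(\ell_{k_{00}}^{-1}) \qquad (k \geq k_{00}),
\end{equation}
which yields a uniform positive lower bound on $\d(k)$ as soon as the fixed positive number $\d(k_{00})$ dominates the geometric tail $O(\ell_{k_{00}}^{-1})$. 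Under $SM(K,a)$ one should be able to upgrade the qualitative positivity of Lemma~\ref{lem:base} to such a quantitative inequality---for example by combining the variance-level even/odd factorization of \cite{BCSV} with the standard entropy--variance comparison $\ent_V f \leq \log(1/\mu_{V,*})\,\var_V(\sqrt f)$ from \cite{DS}, which already yields $\d(k) \geq c/\ell_k^d$ at any finite scale. Once a uniform positive lower bound on $\d$ is in place, it automatically exceeds $\ell_k^{-\e}$ for all $k$ sufficiently large, supplying the base case and closing the induction.
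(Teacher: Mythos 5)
Your propagation step is fine: Theorem \ref{th:maind} does read additively as $\d(k)\geq \d(k-1)-10/\ell_k$, and the computation showing that $\d(k-1)\geq\ell_{k-1}^{-\e}$ implies $\d(k)\geq\ell_k^{-\e}$ for $\e<1$ and $k$ large is correct. The genuine gap is the base case, and your proposed fix does not close it. To start either your induction or your summed recursion $\d(k)\geq\d(k_{00})-10\sum_{j>k_{00}}\ell_j^{-1}$, you need some admissible scale $k_*$ at which $\d(k_*)$ already beats $\ell_{k_*}^{-\e}$ (equivalently, beats the geometric tail, which is of order $\ell_{k_{00}}^{-1}$). The quantitative upgrade you suggest --- combining the variance-level even/odd factorization of \cite{BCSV} with $\ent_V f\leq \log(1/\mu_{V,*})\var_V(\sqrt f)$ --- yields at best $\d(k)\gtrsim \ell_k^{-d}$, since $\log(1/\mu_{V,*})\asymp|V|\asymp\ell_k^d$. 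For $d\geq 2$ this is far smaller than the required $\ell_k^{-1}$ (let alone $\ell_k^{-\e}$), so the "fixed positive number $\d(k_{00})$" never provably dominates the tail; the argument is circular in that the needed input is essentially the statement of Theorem \ref{th:toy} itself. (A secondary problem: the bound $\ent f\leq \log(1/\mu_*)\var(\sqrt f)$ requires a finite spin space, whereas the theorem is stated for general $(S,\scr S,\nu)$; and without hard constraints Lemma \ref{lem:base} only gives $\d(k)\geq e^{-C\ell_k^d}$.)

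This is exactly why the paper proves Theorem \ref{th:toy} by a route independent of Theorem \ref{th:maind}. On a coarser exponential scale $u_k=b^{k/d}$ with $b$ large, $V$ is cut into $A=\cup_i A_i$ and $B=\cup_i B_i$, each a union of many well-separated blocks of the previous generation; Lemma \ref{lem:entBA} reduces $V$ to $A$ and $B$, and the tensorization Lemma \ref{lem:parislemma} (using that $\mu_A$ and $\mu_B$ are products over the $A_i$, resp.\ $B_i$) reduces $A$ and $B$ to single blocks $A_i$, $B_i$. This loses only a fixed factor $4$ per coarse step, giving $\r(k)\geq 4^{-k}\r(k_0)$ and hence $\d(k)\geq c_0\ell_k^{-\e'}$ with $\e'=d\log 4/\log b$ arbitrarily small. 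Some such multiplicative-loss-per-coarse-scale argument (or another mechanism producing a polynomial bound with arbitrarily small exponent) is indispensable here; the additive recursion of Theorem \ref{th:maind} cannot bootstrap itself from the super-exponentially small bounds of Lemma \ref{lem:base}.
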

Theorem \ref{th:maind} and Theorem \ref{th:toy} are more than sufficient for our purpose. Indeed, using \eqref{toy1} and \eqref{EOth2}, taking for instance $\e=1/2$, we see that $$\ell_k\d(k-1)\geq \ell_k^{1/2}=(3/2)^{k/2d}\geq 10(6/5)^{k/d}$$ if $k$ is large enough, and therefore 
 \begin{equation}\label{final_rec}
\d(k)\geq \left(1- (5/6)^{k/d}\right)\d(k-1)\geq \d(k_0)\prod_{j=k_0}^\infty (1-(5/6)^{j/d}).
 \end{equation}
Lemma \ref{lem:base} and \eqref{final_rec} imply $\inf_{k\in\bbN} \d(k) >0$, which concludes the proof of Theorem \ref{th:main} under the assumption $SM(K,a)$.

\subsection{Proof of Theorem \ref{th:maind}}
We start with a simple decomposition that will be used in the inductive step. Recall that $E=EV$ and $O=OV$ are the even and odd sites respectively, in the given region $V$.
\begin{lemma}\label{EOlemma1}
For any $A,B\in\bbF$ such that $V=A\cup B$, for any $f\geq 0$:
\begin{align}\label{EA1}
&\ent_E f=  \mu_E[\ent_{EA}f + \ent_{EB}\mu_{EA}f],\\&
\ent_O f=  \mu_O[\ent_{OA}f + \ent_{OB}\mu_{OA}f].
\end{align}
\end{lemma}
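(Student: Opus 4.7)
The two identities are symmetric under the even--odd swap, so I focus on the first. My plan is to apply the chain rule of Lemma~\ref{lem:telescope} with $\mu_E$ as the ambient measure (so that $\si_O$ and $\t$ are fixed), using the chain $\emptyset \subset EA \subset E$. This yields
\begin{equation*}
\ent_E f = \mu_E\!\left[\ent_{EA} f\right] + \mu_E\!\left[\ent_E \mu_{EA} f\right],
\end{equation*}
so it suffices to identify $\mu_E[\ent_E \mu_{EA} f]$ with $\mu_E[\ent_{EB} \mu_{EA} f]$.

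For this I would exploit that $E$ is an independent set in $\bbZ^d$, hence conditionally on $(\si_O,\t)$ the measure $\mu_E$ factorizes as a product $\bigotimes_{x \in E}\mu_x$, with each single-site kernel $\mu_x$ depending only on $\si$ at the odd neighbors of $x$. Two consequences of this product structure drive the argument. First, since $\mu_{EA} f$ is a function of $\si_{V \setminus EA}$, the $EA$-variables drop out and $\ent_E \mu_{EA} f = \ent_{E \setminus EA} \mu_{EA} f$. Second, writing $EB \setminus EA = E \setminus EA$ and noting that $\mu_{EA} f$ does not depend on $\si_{EA \cap EB}$, the operator $\mu_{EB}$ acts on $\mu_{EA} f$ only through its $EB \setminus EA$ marginal; this marginal coincides with $\mu_{E \setminus EA}$, because each single-site kernel at $x \in EB \setminus EA$ depends only on $\si$ at odd neighbors, all of which lie in $O$ and hence outside both conditionings. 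Combining these observations,
\begin{equation*}
\ent_{EB} \mu_{EA} f = \ent_{EB \setminus EA} \mu_{EA} f = \ent_{E \setminus EA} \mu_{EA} f = \ent_E \mu_{EA} f,
\end{equation*}
as functions of $(\si_O,\t)$, and the outer $\mu_E$ averages trivially on both sides.

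The telescoping is routine; the main obstacle will be the bookkeeping of conditioning in the identification $\mu_{EB \setminus EA} = \mu_{E \setminus EA}$. This depends crucially on the nearest-neighbor hypothesis together with the bipartite structure of $\bbZ^d$: an even site has only odd neighbors, so its single-site conditional measure is unchanged whether we condition on the full $E$-complement (as in the $\mu_E$-factorization) or on the full $EB$-complement (as in the $\mu_{EB}$-factorization). The odd version follows by the same argument with $E$ and $O$ interchanged.
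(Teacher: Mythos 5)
Your proof is correct and follows essentially the same route as the paper's: the telescoping decomposition of Lemma \ref{lem:telescope} applied along $\emptyset\subset EA\subset E$, combined with the product structure of $\mu_E$ over the even sites (each single-site kernel depending only on odd neighbors). The only cosmetic difference is that the paper applies the decomposition a second time and kills the remainder via $\mu_{EB}\mu_{EA}f=\mu_E f$, whereas you identify $\ent_E\mu_{EA}f$ with $\ent_{EB}\mu_{EA}f$ directly; the two steps are interchangeable.
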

\begin{proof}
The decomposition in Lemma \ref{lem:telescope} shows that 
$$
\ent_E f=  \mu_E[\ent_{EA}f] + \ent_{E}\mu_{EA}f.
$$
Another application of that decomposition shows that
$$
\ent_{E}\mu_{EA}f=  \mu_E[\ent_{EB}\mu_{EA}f] + \ent_{E}\mu_{EB}\mu_{EA}f.
$$
However, the product property of $\mu_E$ implies that $\mu_{EB}\mu_{EA}f=\mu_E f$, and therefore 
$$
\ent_{E}\mu_{EB}\mu_{EA}f=0.
$$
The same argument applies to the case of odd sites. 
\end{proof}
Let us give a sketch of the main steps of the proof before entering the details. 
Suppose that 
$V=A\cup B\in\bbF_k$, and suppose that the assumption of Lemma \ref{lem:entBA} is satisfied.  
Then 
\begin{align}\label{EA2}
\ent f\leq   \mu\left[\ent_{A}f + \ent_{B}\mu_A f\right] + \theta(\e)\,\ent f
\end{align}
where we use the fact that $\ent\mu_A f\leq \ent f$. 
Now suppose furthermore that $A,B\in \bbF_{k-1}$. By definition of $\d(k)$ we then have 
\begin{align}\label{EAs2}
\d(k-1)\mu[\ent_A f]&\leq  \mu[\ent_{EA}f +\ent_{OA}f ]\,,\\
\d(k-1)\mu[\ent_B(\mu_Af)]&\leq \mu[\ent_{EB}\mu_A f + \ent_{OB}\mu_A f].
\end{align}
%
Therefore, using Lemma \ref{EOlemma1},
\begin{align}\label{EA3}
\d(k-1)\ent f&\leq \mu[\ent_{E}f +  \ent_{O}f] + \theta(\e)\d(k-1)\ent f\, + \\
& \quad +  \mu[\ent_{EB}\mu_A f -\ent_{EB}\mu_{EA} f +
\ent_{OB}\mu_A f - \ent_{OB}\mu_{OA} f]. \nonumber 
\end{align}
Disregarding the second line in \eqref{EA3} would allow us to obtain a bound of the form $$\d(k)\geq (1-\theta(\e))\d(k-1),$$ provided that an arbitrary  set $V\in\bbF_k$ can be decomposed into sets $A,B\in\bbF_{k-1}$ as above. 
We remark that if $\mu$ were a product over $A,B$ then by convexity
one would have  \begin{align}\label{EA4}
\mu[\ent_{EB}\mu_{A} f ]\leq \mu[\ent_{EB}\mu_{EA} f ],
\end{align}
and the same bound for odd sites. 
Thus in the product case the second line in \eqref{EA3} may be neglected and we recover a factorization statement which is contained already in Lemma \ref{lem:parislemma}. 
In the case we are interested in however one has $A\cap B\neq \emptyset$ and we cannot hope for a bound like \eqref{EA4}. For an illustration of the problem, consider for instance the 1D case, with $V=\{1,\dots,n\}$, $A=\{1,\dots,m\}$ and $B=\{m-\ell,\dots,n\}$ for some integers $0< \ell <m<n$. 
Suppose that $m+1$ is even, and suppose that $f$ only depends on $\si_m$. 
Then, once all odd sites have been frozen, $\mu_{EA}f$ is a constant, and therefore
$\ent_{EB}\mu_{EA} f=0$. 
On the other hand, $\mu_{A}f$ depends on $\si_{m+1}$, since the conditional expectation $\mu_A$ depends non-trivially on $\si_{m+1}$, and thus we may well have $\ent_{EB}\mu_{A} f\neq 0$. 

Therefore, the second line of \eqref{EA3} does produce a nontrivial error term. 
At this point a fruitful idea from \cite{MarStFlour} comes to our rescue. Namely, one can average over many possible choices of the decomposition $V=A\cup B$ and hope that the averaging lowers the size of the overall error.  This strategy works very well if the error terms have an additive structure, such as in the case of \cite{Cesi}. Here there is no simple additive structure to exploit, and we resort to 
using the martingale-type decompositions from Lemma \ref{lem:telescope} to control the average error term by means of the global entropy $\ent f$, see Lemma \ref{lem:anyD}. This will be sufficient  to obtain  the recursive estimate \eqref{EOth2}. 
To implement this argument, we use a slightly different averaging procedure than in \cite{Cesi}.

We turn to the actual proof. We start with some geometric considerations, see Figure \ref{fig1} for a two-dimensional representation. Set $r:=\lfloor \tfrac16\,\ell_{k+d}\rfloor$, and define the rectangular sets 
 \begin{align}\label{geo1}
& Q:=[0,\ell_{k+1}]\times\dots\times [0,\ell_{k+d-1}]\times [\tfrac13\,\ell_{k+d},\ell_{k+d}]\,\\
& R_i:=[0,\ell_{k+1}]\times\dots\times [0,\ell_{k+d-1}]\times [0,\tfrac12\,\ell_{k+d}+i]\,,\quad i=0,\dots,r+1.
\end{align}
Suppose 
that $V\subset [0,\ell_{k+1}]\times\dots\times [0,\ell_{k+d}]$,
and define, for $i=1,\dots,r+1$:
\begin{align}\label{geo2}
B:=Q\cap V\,,\quad \text{and} \quad A_i:=\begin{cases}
(R_i\cap E)\cup (R_{i-1}\cap O) & \text{ if $i$ is even}\\
(R_i\cap O)\cup (R_{i-1}\cap E) & \text{ if $i$ is odd}
\end{cases}
\end{align}
where, as usual $E=EV$ and $O=OV$ denote the even and the odd sites of $V$ respectively. Define also 
\begin{align}\label{geo3}
\G_i=A_i\setminus A_{i-1}\,,\qquad i=2,\dots,r+1.
\end{align}
\begin{figure}[htp]
\center
\begin{subfigure}

\begin{tikzpicture}[scale=2]
\draw  [fill=gray, fill opacity=0.2] (1/4,1/4) -- (1/4,9/4) -- (3/4,9/4)--(3/4,10/4)--(7/4,10/4)--(7/4,4/4)--(5/4,4/4)--(5/4,1/4)--(1/4,1/4);
\draw [fill,opacity=0.2] (0,0) -- (0,10/4);
\draw [fill,opacity=0.2] (1/4,0) -- (1/4,10/4);
\draw [fill,opacity=0.2] (1/2,0) -- (1/2,10/4);
\draw [fill,opacity=0.2] (3/4,0) -- (3/4,10/4);
\draw [fill,opacity=0.2] (1,0) -- (1,10/4);
\draw [fill,opacity=0.2] (5/4,0) -- (5/4,10/4);
\draw [fill,opacity=0.2] (7/4,0) -- (7/4,10/4);
\draw [fill,opacity=0.2] (3/2,0) -- (3/2,10/4);
\draw [fill,opacity=0.2] (2,0) -- (2,10/4);
\draw [fill,opacity=0.2] (0,0) -- (2,0);
\draw [fill,opacity=0.2] (0,1/2) -- (2,1/2);
\draw [fill,opacity=0.2] (0,1/4) -- (2,1/4);
\draw [fill,opacity=0.2] (0,3/4) -- (2,3/4);
\draw [fill,opacity=0.2] (0,5/4) -- (2,5/4);
\draw [fill,opacity=0.2] (0,7/4) -- (2,7/4);
\draw [fill,opacity=0.2] (0,9/4) -- (2,9/4);
\draw [fill,opacity=0.2] (0,10/4) -- (2,10/4);
\draw [fill,opacity=0.2] (0,1) -- (2,1);
\draw [fill,opacity=0.2] (0,3/2) -- (2,3/2);
\draw [fill,opacity=0.2] (0,2) -- (2,2);
\draw[fill] (0,0) circle   [radius=0.03];
\draw[fill] (1/4,0) circle [radius=0.03];
\draw[fill] (2/4,0) circle [radius=0.03];
\draw[fill] (3/4,0) circle [radius=0.03];
\draw[fill](4/4,0) circle [radius=0.03];
\draw[fill] (5/4,0) circle [radius=0.03];
\draw[fill] (6/4,0) circle [radius=0.03];
\draw[fill] (7/4,0) circle [radius=0.03];
\draw[fill] (8/4,0) circle [radius=0.03];
\draw[fill] (0,1/4) circle [radius=0.03];
\draw[fill] (1/4,1/4) circle [radius=0.03];
\draw[fill] (2/4,1/4) circle [radius=0.03];
\draw[fill] (3/4,1/4) circle [radius=0.03];
\draw[fill] (4/4,1/4) circle [radius=0.03];
\draw[fill] (5/4,1/4) circle [radius=0.03];
\draw[fill] (6/4,1/4) circle [radius=0.03];
\draw[fill] (7/4,1/4) circle [radius=0.03];
\draw[fill] (8/4,1/4) circle [radius=0.03];
\draw[fill] (0,1/2) circle [radius=0.03];
\draw[fill] (1/4,1/2) circle [radius=0.03];
\draw[fill] (2/4,1/2) circle [radius=0.03];
\draw[fill] (3/4,1/2) circle [radius=0.03];
\draw[fill] (4/4,1/2) circle [radius=0.03];
\draw[fill] (5/4,1/2) circle [radius=0.03];
\draw[fill] (6/4,1/2) circle [radius=0.03];
\draw[fill] (7/4,1/2) circle [radius=0.03];
\draw[fill] (8/4,1/2) circle [radius=0.03];
\draw[fill] (0,3/4) circle [radius=0.03];
\draw[fill=green] (1/4,3/4) circle [radius=0.03];
\draw[fill=green] (2/4,3/4) circle [radius=0.03];
\draw[fill=green] (3/4,3/4) circle [radius=0.03];
\draw[fill=green] (4/4,3/4) circle [radius=0.03];
\draw[fill=green] (5/4,3/4) circle [radius=0.03];
\draw[fill] (6/4,3/4) circle [radius=0.03];
\draw[fill] (7/4,3/4) circle [radius=0.03];
\draw[fill] (8/4,3/4) circle [radius=0.03];
\draw[fill] (0,1) circle [radius=0.03];
\draw[fill=green] (1/4,1) circle [radius=0.03];
\draw[fill=green] (2/4,1) circle [radius=0.03];
\draw[fill=green] (3/4,1) circle [radius=0.03];
\draw[fill=green] (4/4,1) circle [radius=0.03];
\draw[fill=green] (5/4,1) circle [radius=0.03];
\draw[fill=green] (6/4,1) circle [radius=0.03];
\draw[fill=green] (7/4,1) circle [radius=0.03];
\draw[fill] (8/4,1) circle [radius=0.03];
\draw[fill] (0,5/4) circle [radius=0.03];
\draw[fill=green] (1/4,5/4) circle [radius=0.03];
\draw[fill=green] (2/4,5/4) circle [radius=0.03];
\draw[fill=green] (3/4,5/4) circle [radius=0.03];
\draw[fill=green] (4/4,5/4) circle [radius=0.03];
\draw[fill=green] (5/4,5/4) circle [radius=0.03];
\draw[fill=green] (6/4,5/4) circle [radius=0.03];
\draw[fill=green] (7/4,5/4) circle [radius=0.03];
\draw[fill=black] (8/4,5/4) circle [radius=0.03];
\draw[fill] (0,6/4) circle [radius=0.03];
\draw[fill=green] (1/4,6/4) circle [radius=0.03];
\draw[fill=green] (2/4,6/4) circle [radius=0.03];
\draw[fill=green] (3/4,6/4) circle [radius=0.03];
\draw[fill=green] (4/4,6/4) circle [radius=0.03];
\draw[fill=green] (5/4,6/4) circle [radius=0.03];
\draw[fill=green] (6/4,6/4) circle [radius=0.03];
\draw[fill=green] (7/4,6/4) circle [radius=0.03];
\draw[fill] (8/4,6/4) circle [radius=0.03];
\draw[fill] (0,7/4) circle [radius=0.03];
\draw[fill=green] (1/4,7/4) circle [radius=0.03];
\draw[fill=green] (2/4,7/4) circle [radius=0.03];
\draw[fill=green] (3/4,7/4) circle [radius=0.03];
\draw[fill=green] (4/4,7/4) circle [radius=0.03];
\draw[fill=green] (5/4,7/4) circle [radius=0.03];
\draw[fill=green] (6/4,7/4) circle [radius=0.03];
\draw[fill=green] (7/4,7/4) circle [radius=0.03];
\draw[fill=black] (8/4,7/4) circle [radius=0.03]; 
\draw[fill=black] (0,8/4) circle [radius=0.03];
\draw[fill=green] (1/4,8/4) circle [radius=0.03];
\draw[fill=green] (2/4,8/4) circle [radius=0.03];
\draw[fill=green] (3/4,8/4) circle [radius=0.03];
\draw[fill=green] (4/4,8/4) circle [radius=0.03];
\draw[fill=green] (5/4,8/4) circle [radius=0.03];
\draw[fill=green] (6/4,8/4) circle [radius=0.03];
\draw[fill=green] (7/4,8/4) circle [radius=0.03];
\draw[fill] (8/4,8/4) circle [radius=0.03];
\draw[fill] (0,9/4) circle [radius=0.03];
\draw[fill=green] (1/4,9/4) circle [radius=0.03];
\draw[fill=green] (2/4,9/4) circle [radius=0.03];
\draw[fill=green] (3/4,9/4) circle [radius=0.03];
\draw[fill=green] (4/4,9/4) circle [radius=0.03];
\draw[fill=green] (5/4,9/4) circle [radius=0.03];
\draw[fill=green] (6/4,9/4) circle [radius=0.03];
\draw[fill=green] (7/4,9/4) circle [radius=0.03];
\draw[fill] (8/4,9/4) circle [radius=0.03];
\draw[fill] (0,10/4) circle [radius=0.03];
\draw[fill] (1/4,10/4) circle [radius=0.03];
\draw[fill] (2/4,10/4) circle [radius=0.03];
\draw[fill=green] (3/4,10/4) circle [radius=0.03];
\draw[fill=green] (4/4,10/4) circle [radius=0.03];
\draw[fill=green] (5/4,10/4) circle [radius=0.03];
\draw[fill=green] (6/4,10/4) circle [radius=0.03];
\draw[fill=green] (7/4,10/4) circle [radius=0.03];
\draw[fill] (8/4,10/4) circle [radius=0.03];
\node [below left] at  (0,0) {0}; 
\node [below right] at  (2,0) {$\ell_{k+1}$};
\node [left] at  (0,10.5/4) {$\ell_{k+2}$};
\node [left] at  (0,3/4) {$\frac{1}{3}\ell_{k+2}$};
\node [left] at  (0,5/4) {$\frac{1}{2}\ell_{k+2}$};
\end{tikzpicture}
\end{subfigure}
\hfill
\begin{subfigure}

\begin{tikzpicture}[scale=2]
\draw  [fill=gray, fill opacity=0.2] (1/4,1/4) -- (1/4,9/4) -- (3/4,9/4)--(3/4,10/4)--(7/4,10/4)--(7/4,4/4)--(5/4,4/4)--(5/4,1/4)--(1/4,1/4);
\draw [fill,opacity=0.2] (0,0) -- (0,10/4);
\draw [fill,opacity=0.2] (1/4,0) -- (1/4,10/4);
\draw [fill,opacity=0.2] (1/2,0) -- (1/2,10/4);
\draw [fill,opacity=0.2] (3/4,0) -- (3/4,10/4);
\draw [fill,opacity=0.2] (1,0) -- (1,10/4);
\draw [fill,opacity=0.2] (5/4,0) -- (5/4,10/4);
\draw [fill,opacity=0.2] (7/4,0) -- (7/4,10/4);
\draw [fill,opacity=0.2] (3/2,0) -- (3/2,10/4);
\draw [fill,opacity=0.2] (2,0) -- (2,10/4);
\draw [fill,opacity=0.2] (0,0) -- (2,0);
\draw [fill,opacity=0.2] (0,1/2) -- (2,1/2);
\draw [fill,opacity=0.2] (0,1/4) -- (2,1/4);
\draw [fill,opacity=0.2] (0,3/4) -- (2,3/4);
\draw [fill,opacity=0.2] (0,5/4) -- (2,5/4);
\draw [fill,opacity=0.2] (0,7/4) -- (2,7/4);
\draw [fill,opacity=0.2] (0,9/4) -- (2,9/4);
\draw [fill,opacity=0.2] (0,10/4) -- (2,10/4);
\draw [fill,opacity=0.2] (0,1) -- (2,1);
\draw [fill,opacity=0.2] (0,3/2) -- (2,3/2);
\draw [fill,opacity=0.2] (0,2) -- (2,2);
\draw[fill] (0,0) circle   [radius=0.03];
\draw[fill] (1/4,0) circle [radius=0.03];
\draw[fill] (2/4,0) circle [radius=0.03];
\draw[fill] (3/4,0) circle [radius=0.03];
\draw[fill](4/4,0) circle [radius=0.03];
\draw[fill] (5/4,0) circle [radius=0.03];
\draw[fill] (6/4,0) circle [radius=0.03];
\draw[fill] (7/4,0) circle [radius=0.03];
\draw[fill] (8/4,0) circle [radius=0.03];
\draw[fill] (0,1/4) circle [radius=0.03];
\draw[fill=yellow] (1/4,1/4) circle [radius=0.03];
\draw[fill=yellow] (2/4,1/4) circle [radius=0.03];
\draw[fill=yellow] (3/4,1/4) circle [radius=0.03];
\draw[fill=yellow] (4/4,1/4) circle [radius=0.03];
\draw[fill=yellow] (5/4,1/4) circle [radius=0.03];
\draw[fill] (6/4,1/4) circle [radius=0.03];
\draw[fill] (7/4,1/4) circle [radius=0.03];
\draw[fill] (8/4,1/4) circle [radius=0.03];
\draw[fill] (0,1/2) circle [radius=0.03];
\draw[fill=yellow] (1/4,1/2) circle [radius=0.03];
\draw[fill=yellow] (2/4,1/2) circle [radius=0.03];
\draw[fill=yellow] (3/4,1/2) circle [radius=0.03];
\draw[fill=yellow] (4/4,1/2) circle [radius=0.03];
\draw[fill=yellow] (5/4,1/2) circle [radius=0.03];
\draw[fill] (6/4,1/2) circle [radius=0.03];
\draw[fill] (7/4,1/2) circle [radius=0.03];
\draw[fill] (8/4,1/2) circle [radius=0.03];
\draw[fill] (0,3/4) circle [radius=0.03];
\draw[fill=yellow] (1/4,3/4) circle [radius=0.03];
\draw[fill=yellow] (2/4,3/4) circle [radius=0.03];
\draw[fill=yellow] (3/4,3/4) circle [radius=0.03];
\draw[fill=yellow] (4/4,3/4) circle [radius=0.03];
\draw[fill=yellow] (5/4,3/4) circle [radius=0.03];
\draw[fill] (6/4,3/4) circle [radius=0.03];
\draw[fill] (7/4,3/4) circle [radius=0.03];
\draw[fill] (8/4,3/4) circle [radius=0.03];
\draw[fill] (0,1) circle [radius=0.03];
\draw[fill=yellow] (1/4,1) circle [radius=0.03];
\draw[fill=yellow] (2/4,1) circle [radius=0.03];
\draw[fill=yellow] (3/4,1) circle [radius=0.03];
\draw[fill=yellow] (4/4,1) circle [radius=0.03];
\draw[fill=yellow] (5/4,1) circle [radius=0.03];
\draw[fill=yellow] (6/4,1) circle [radius=0.03];
\draw[fill=yellow] (7/4,1) circle [radius=0.03];
\draw[fill] (8/4,1) circle [radius=0.03];
\draw[fill] (0,5/4) circle [radius=0.03];
\draw[fill=yellow] (1/4,5/4) circle [radius=0.03];
\draw[fill=yellow] (2/4,5/4) circle [radius=0.03];
\draw[fill=yellow] (3/4,5/4) circle [radius=0.03];
\draw[fill=yellow] (4/4,5/4) circle [radius=0.03];
\draw[fill=yellow] (5/4,5/4) circle [radius=0.03];
\draw[fill=yellow] (6/4,5/4) circle [radius=0.03];
\draw[fill=yellow] (7/4,5/4) circle [radius=0.03];
\draw[fill] (8/4,5/4) circle [radius=0.03];
\draw[fill] (0,6/4) circle [radius=0.03];
\draw[fill=red] (1/4,6/4) circle [radius=0.03];
\draw[fill=yellow] (2/4,6/4) circle [radius=0.03];
\draw[fill=red] (3/4,6/4) circle [radius=0.03];
\draw[fill=yellow] (4/4,6/4) circle [radius=0.03];
\draw[fill=red] (5/4,6/4) circle [radius=0.03];
\draw[fill=yellow] (6/4,6/4) circle [radius=0.03];
\draw[fill=red] (7/4,6/4) circle [radius=0.03];
\draw[fill] (8/4,6/4) circle [radius=0.03];
\draw[fill] (0,7/4) circle [radius=0.03];
\draw[fill] (1/4,7/4) circle [radius=0.03];
\draw[fill=red] (2/4,7/4) circle [radius=0.03];
\draw[fill] (3/4,7/4) circle [radius=0.03];
\draw[fill=red] (4/4,7/4) circle [radius=0.03];
\draw[fill] (5/4,7/4) circle [radius=0.03];
\draw[fill=red] (6/4,7/4) circle [radius=0.03];
\draw[fill] (7/4,7/4) circle [radius=0.03];
\draw[fill] (8/4,7/4) circle [radius=0.03]; 
\draw[fill] (0,8/4) circle [radius=0.03];
\draw[fill] (1/4,8/4) circle [radius=0.03];
\draw[fill] (2/4,8/4) circle [radius=0.03];
\draw[fill] (3/4,8/4) circle [radius=0.03];
\draw[fill] (4/4,8/4) circle [radius=0.03];
\draw[fill] (5/4,8/4) circle [radius=0.03];
\draw[fill] (6/4,8/4) circle [radius=0.03];
\draw[fill] (7/4,8/4) circle [radius=0.03];
\draw[fill] (8/4,8/4) circle [radius=0.03];
\draw[fill] (0,9/4) circle [radius=0.03];
\draw[fill] (1/4,9/4) circle [radius=0.03];
\draw[fill] (2/4,9/4) circle [radius=0.03];
\draw[fill] (3/4,9/4) circle [radius=0.03];
\draw[fill] (4/4,9/4) circle [radius=0.03];
\draw[fill] (5/4,9/4) circle [radius=0.03];
\draw[fill] (6/4,9/4) circle [radius=0.03];
\draw[fill] (7/4,9/4) circle [radius=0.03];
\draw[fill] (8/4,9/4) circle [radius=0.03];
\draw[fill] (0,10/4) circle [radius=0.03];
\draw[fill] (1/4,10/4) circle [radius=0.03];
\draw[fill] (2/4,10/4) circle [radius=0.03];
\draw[fill] (3/4,10/4) circle [radius=0.03];
\draw[fill] (4/4,10/4) circle [radius=0.03];
\draw[fill] (5/4,10/4) circle [radius=0.03];
\draw[fill] (6/4,10/4) circle [radius=0.03];
\draw[fill] (7/4,10/4) circle [radius=0.03];
\draw[fill] (8/4,10/4) circle [radius=0.03];
\node [below left] at  (0,0) {0}; 
\node [below right] at  (2,0) {$\ell_{k+1}$};
\node [left] at  (0,10.5/4) {$\ell_{k+2}$};
\node [left] at  (0,3/4) {$\frac{1}{3}\ell_{k+2}$};
\node [left] at  (0,5/4) {$\frac{1}{2}\ell_{k+2}$};
\end{tikzpicture}
\end{subfigure}

\caption{\footnotesize{The gray area is the volume $V$. Left: $B$ is the set of green vertices. Right: $A_1$ is the set of  yellow vertices, $\G_2$ is the set of red vertices, and $A_2$ is the set of yellow and red vertices together.}}
\label{fig1}
\end{figure}
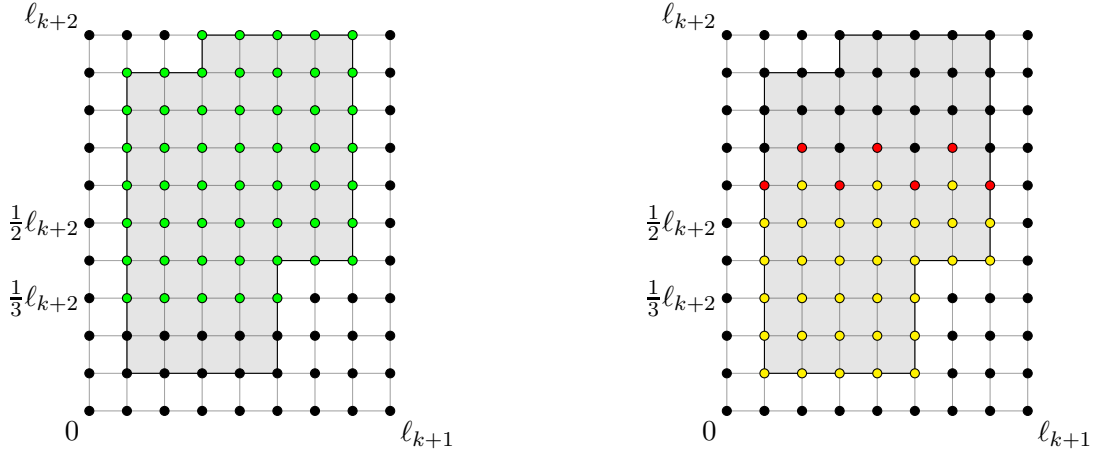

\begin{lemma}\label{lem:geo}
Suppose that $V\subset [0,\ell_{k+1}]\times\dots\times [0,\ell_{k+d}]$, and that $V\notin \bbF_{k-1}$. 
Referring to the above setting,  for all $i=1,\dots
r$: 
\begin{enumerate}
\item $V=A_i\cup B$, $V\setminus B\neq \emptyset$ and $V\setminus A_i\neq \emptyset$;
\item $d(V\setminus B,V\setminus A_i)\geq \frac14\,\ell_k$;
\item $B\in \bbF_{k-1}$ and $A_i\in \bbF_{k-1}$;
\item $\G_{i+1}\subset E$ if $i$ is odd, and $\G_{i+1}\subset O$ if $i$ is even. Moreover $A_{i}$ and $V\setminus A_{i+1}$ become independent if we condition on the spins in $\G_{i+1}$, that is 
\begin{align}\label{geo4}
\mu_V\left(\cdot|\si_{\,\G_{i+1}}\right)=\mu_{V\setminus \G_{i+1}}=\mu_{A_{i}}\mu_{V\setminus A_{i+1}}= 
\mu_{V\setminus A_{i+1}}\mu_{A_{i}}.
\end{align}
\end{enumerate} 
\end{lemma}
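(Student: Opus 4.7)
My plan is to verify each of the four assertions by a direct inspection of the last-coordinate ranges of the building blocks $R_{i-1}$, $R_i$, $R_{i+1}$ and $Q$. The key numerical identity is $\tfrac{2}{3}\ell_{k+d}=\ell_k$ (equivalently $\tfrac{1}{6}\ell_{k+d}=\tfrac{1}{4}\ell_k$), which makes both $Q$ and each $R_i$ for $i\le r$ fit, after a permutation of coordinates, in an $\bbF_{k-1}$-rectangle of sides $\ell_k,\ell_{k+1},\dots,\ell_{k+d-1}$.

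For claim (1), one first observes that $R_{i-1}\subset A_i$ regardless of the parity of $i$: the even sites of $R_{i-1}$ belong to $R_i\cap E\subset A_i$ and the odd sites to $R_{i-1}\cap O\subset A_i$. Every $x\in V$ with $x_d<\tfrac{1}{3}\ell_{k+d}$ therefore lies in $R_{i-1}\subset A_i$, while every $x\in V$ with $x_d\ge\tfrac{1}{3}\ell_{k+d}$ lies in $Q\cap V=B$, yielding $V=A_i\cup B$. Non-emptiness of $V\setminus B$ and $V\setminus A_i$ follows by contraposition: if $V\subset B\subset Q$ or $V\subset A_i\subset R_i$, then $V$ would fit in a box of sides $\ell_{k+1},\dots,\ell_{k+d-1},\ell_k$, hence in $\bbF_{k-1}$, contradicting the hypothesis. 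The same observation gives claim (3) immediately. Claim (2) is a distance estimate: every $x\in V\setminus B$ has $x_d<\tfrac{1}{3}\ell_{k+d}$, and every $y\in V\setminus A_i\subset V\setminus R_{i-1}$ has $y_d>\tfrac{1}{2}\ell_{k+d}+i-1$, so the graph distance is at least $\tfrac{1}{6}\ell_{k+d}+i-1\ge\tfrac{1}{4}\ell_k$ for $i\ge 1$.

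For claim (4), the parity of $\G_{i+1}$ is read off the definitions: when $i$ is odd one has $A_i\cap O=R_i\cap O=A_{i+1}\cap O$, so $\G_{i+1}=(A_{i+1}\setminus A_i)\cap E=(R_{i+1}\setminus R_{i-1})\cap E\subset E$, and the case $i$ even is symmetric. The factorization \eqref{geo4} then reduces, via the Markov property of Gibbs measures with nearest-neighbor interaction, to showing that no edge of $\cE$ directly joins a site of $A_i$ to a site of $V\setminus A_{i+1}$. I expect this to be the step requiring the most care. It proceeds by a short case analysis on the parity of $i$ and of the two endpoints of a putative edge: since every lattice edge changes exactly one coordinate by exactly $1$ and hence flips parity, one must verify that in each of the four parity configurations the last coordinates of the two endpoints differ by at least two. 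Writing $M_j=\lfloor\tfrac{1}{2}\ell_{k+d}+j\rfloor$, for $i$ even one has $A_i\cap E\subset\{x_d\le M_i\}$, $A_i\cap O\subset\{x_d\le M_i-1\}$, and $V\setminus A_{i+1}$ contains only odd sites with $x_d\ge M_i+2$ and even sites with $x_d\ge M_i+1$; matching by opposite parities forces a vertical gap of at least two, and the analogous bounds handle the odd $i$ case. Once this separation is established, the Hamiltonian conditioned on $\sigma_{\G_{i+1}}$ decouples into one term depending only on $\sigma_{A_i}$ and one depending only on $\sigma_{V\setminus A_{i+1}}$, from which the three identities in \eqref{geo4} follow by the DLR property.
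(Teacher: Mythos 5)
Your proof is correct and follows essentially the same route as the paper: the identity $\tfrac23\ell_{k+d}=\ell_k$, contradiction with $V\notin\bbF_{k-1}$ for the non-emptiness and containment claims, the direct last-coordinate gap for part (2), and the parity computation $\G_{i+1}=(R_{i+1}\setminus R_{i-1})\cap E$ for part (4). Your only addition is to make explicit, via the parity case analysis on $M_j$, the fact that no edge joins $A_i$ to $V\setminus A_{i+1}$ — a point the paper leaves implicit in its assertion that any path in $V$ between these sets must pass through $\G_{i+1}$ — and this verification is accurate.
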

\begin{proof}
1. Suppose that $V\setminus B$ is empty. Then $V=B$ and therefore, up to translation it is contained in $[0,\ell_{k+1}]\times\dots\times [0,\tfrac23\ell_{k+d}]$. Since $\tfrac23\ell_{k+d}= \ell_k$ this would imply that up to   permutation of the coordinates $V\in [0,\ell_{k}]\times [0,\ell_{k+1}]\times\dots\times [0,\ell_{k+d-1}]$ which violates the assumption $V\notin \bbF_{k-1}$. The same argument shows that $R_{i-1}\cap V\neq \emptyset$ for all $i$ and $A_i\neq \emptyset$ follows from $A_i\supset R_{i-1}\cap V$.

\smallskip

\noindent
2. If $x\in V\setminus B$ and $y\in V\setminus A_i$ then $y_d-x_d\geq \frac12\ell_{k+d}-\frac13\ell_{k+d}=\frac16\ell_{k+d}=\frac14\ell_k$.

\smallskip

\noindent
3. The maximal stretch of $B$ along the $d$-th coordinate is at most $\frac23 \ell_{k+d}=\ell_k$ and therefore up to 
translations and permutation of the coordinates $B\in [0,\ell_{k}]\times [0,\ell_{k+1}]\times\dots\times [0,\ell_{k+d-1}]$ which says that $B\in \bbF_{k-1}$. The same argument shows that $A_i\subset R_i\cap V\in \bbF_{k-1}$ for all $i$.

\smallskip

\noindent
4. If $i\geq 1$ is odd, then
\begin{align*}
\G_{i+1}&= \left[(R_{i+1}\cap E)\cup (R_{i}\cap O)\right] \setminus 
\left[(R_{i}\cap O)\cup (R_{i-1}\cap E)\right] \\&= (R_{i+1}\cap E)\setminus (R_{i-1}\cap E),
\end{align*}
and therefore $\G_{i+1}\subset E$. Similarly, one has $\G_{i+1}\subset O$ if $i$ is even.
Moreover,  any $\bbZ^d$-path inside $V$ connecting $A_{i}$ with $V\setminus A_{i+1}$ must go through $\G_{i+1}$, and therefore $A_{i}$ and $V\setminus A_{i+1}$ become independent if we condition on the spins in $\G_{i+1}$.
\end{proof}

\begin{lemma}\label{lem:ABlemmad}
Let $V$, $B$ and $A_i$ be as in Lemma \ref{lem:geo}. If $SM(K,a)$ holds, then 
\begin{equation}\label{Cesiab1}
\|\mu_B\mu_{A_i} g - \mu g\|_\infty \leq \e_k \mu (|g|)\,,\qquad \e_k=5^{d}K\ell_{k}^{d-1}e^{-a\ell_k/4}\,,
\end{equation}
for all $i=1,\dots r$, all functions $g\in L^1(\mu)$, and for all $k\geq k_0=k_0(K,a,d)$.
 \end{lemma}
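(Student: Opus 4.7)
The plan is to combine the DLR equation with the spatial mixing condition applied to the block $B$. Set $D:=V\setminus B$, $C:=V\setminus A_i$, and $h:=\mu_{A_i}g$, so that $h$ depends on $\sigma$ only through $\sigma_C$ (and $\tau$), with $C\subset B$. Lemma \ref{lem:geo}(1)--(2) ensures that $D$ and $C$ are nonempty and separated by $d(D,C)\geq \ell_k/4$. By $\mu=\mu\mu_B$, the quantity $\mu g=\mu h$ equals the $\mu_{V,D}^\tau$-average of $\mu_B^{\sigma_D,\tau}h$, so
$$\mu_B^{\sigma_D,\tau}\mu_{A_i}g \;-\; \mu^\tau g \;=\; \int\bigl[\mu_B^{\sigma_D,\tau}h - \mu_B^{\sigma_D',\tau}h\bigr]\,d\mu_{V,D}^\tau(\sigma_D'),$$
and thus it suffices to control the integrand uniformly in $\sigma_D,\sigma_D'$.

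A standard change of measure, using the Radon-Nikodym density $\psi_{B,C}^{\,\cdot}$ from \eqref{radon-nikodym}, gives
$$\mu_B^{\sigma_D,\tau}h - \mu_B^{\sigma_D',\tau}h \;=\; \mu_B^{\sigma_D',\tau}\!\bigl[h(r-1)\bigr],\qquad r := \psi_{B,C}^{\sigma_D,\tau}\bigl/\psi_{B,C}^{\sigma_D',\tau},$$
so that $|\mu_B^{\sigma_D,\tau}h - \mu_B^{\sigma_D',\tau}h|\leq \|r-1\|_\infty\,\mu_B^{\sigma_D',\tau}(|h|)$. Once the uniform bound $\|r-1\|_\infty \leq \e_k$ is in hand, integrating over $\sigma_D'$ against $\mu_{V,D}^\tau$ and using $\mu\mu_B|h|=\mu|h|\leq \mu|g|$ (the last step by Jensen applied to $h=\mu_{A_i}g$) will conclude the proof.

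The heart of the argument is therefore the bound $\|r-1\|_\infty \leq \e_k$. Since the Hamiltonian on $B$ involves only nearest-neighbor interactions, $\psi_{B,C}^{\sigma_D,\tau}$ is invariant under modifications of $\sigma_D$ off the set $D\cap \partial B$; a direct geometric count gives $|D\cap\partial B|\leq \ell_{k+1}\cdots \ell_{k+d-1}$, which is well below $5^d\ell_k^{d-1}$. Enumerating this set as $x_1,\dots,x_N$ and interpolating between $\sigma_D$ and $\sigma_D'$ one site at a time produces a telescoping factorization $r=\prod_{j=1}^N r_j$, where successive boundary configurations $\sigma^{(j-1)},\sigma^{(j)}$ differ only at $x_j\in\partial B$. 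Applying $SM(K,a)$ with $\Lambda=B$ and $\Delta=C$, together with $d(x_j,C)\geq \ell_k/4$ from Lemma \ref{lem:geo}(2), yields $|r_j-1|\leq Ke^{-a\ell_k/4}$.

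The last step is to compose these multiplicative errors. For $k$ large enough that $NKe^{-a\ell_k/4}\leq 1$, one has
$$\|r-1\|_\infty \;\leq\; \prod_{j=1}^N \bigl(1+|r_j-1|\bigr) - 1 \;\leq\; \exp\!\bigl(NKe^{-a\ell_k/4}\bigr)-1 \;\leq\; 2NKe^{-a\ell_k/4} \;\leq\; \e_k,$$
which is the required estimate; the threshold $k_0=k_0(K,a,d)$ is chosen so that this regime holds. The main technical obstacle is precisely this composition: the polynomial bound $N\lesssim \ell_k^{d-1}$ (made possible by the nearest-neighbor structure, which restricts the ``effective'' boundary to $D\cap \partial B$ rather than all of $D$) must be overcome by the exponential decay $e^{-a\ell_k/4}$ inherited from $SM(K,a)$ and the geometric separation provided by Lemma \ref{lem:geo}(2). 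Once this balance is secured, the convexity estimate $e^x-1\leq 2x$ for $x\leq 1$ yields the clean constant $\e_k$.
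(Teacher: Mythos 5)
Your proposal is correct and follows essentially the same route as the paper's proof: the DLR decomposition of $\mu g$ as an average of $\mu_B^{\sigma_D',\tau}h$, the change of measure via the densities $\psi_{B,C}$, the observation that only the spins on $(\partial B)\cap(V\setminus B)$ matter (a set of cardinality $O(\ell_k^{d-1})$ at distance at least $\ell_k/4$ from $C$ by Lemma \ref{lem:geo}(2)), the single-site telescoping interpolation controlled by $SM(K,a)$, and the final composition of the multiplicative errors. The only cosmetic difference is the last estimate, where you use $\prod(1+|r_j-1|)-1\le e^{N\e_0}-1\le 2N\e_0$ in place of the paper's binomial expansion $(1+\e_0)^m-1\le em\e_0$; both yield the stated constant $5^dK$.
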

\begin{proof}
Since $i$ is fixed, for simplicity we write $A$ instead of $A_i$.  Set $h=\mu_A g$. Then $h$ depends only on $\si_\D$, where $\D=V\setminus A\subset B$. We are going to use \eqref{ssmca} with $\L=B$. 
Let $\O_{B,\t}$ denote the set of all spin configurations $\eta\in\O_{B^c}$ which agree  on the set $V^c$ with the overall boundary condition $\t\in\O_{V^c}$. For any $\eta\in\O_{B,\t}$ one has
\begin{align}\label{ABlemma1}
\mu^\eta_B\left(\mu_{A}g\right)- \mu g &=  
 \int\mu^\t_{V,V\setminus B}(d\eta')
\left(\mu_{B,\D}^\eta h-\mu^{\eta'}_{B,\D} h\right) 
\\&
= \int\mu^\t_{V,V\setminus B}(d\eta')
\int\mu^{\eta'}_{B,\D}(d\si)
\left(\frac{\psi_{B,\D}^{\eta}(\si)}{\psi_{B,\D}^{\eta'}(\si)}-1\right)
h(\si).
\end{align}
Therefore,
\begin{align}\label{ABlemma2}
\|\mu_{B}\mu_A g - \mu g\|_\infty &\leq 
\e\,\mu(|h|)\leq \e\,\mu(|g|),
\end{align}
where 
\begin{align}\label{ABlemma3}
\e:=
\sup_{\eta,\eta'\in\O_{B,\t }}
\left\|\frac{\psi_{B,\D}^{\eta}}{\psi_{B,\D}^{\eta'}}-1\right\|_\infty.
\end{align}
Since $\psi_{B,\D}^{\eta}$ depends on $\eta$ only through the spins in $\partial B$, 
the configurations $\eta,\eta'\in\O_{B,\t }$ in \eqref{ABlemma3} can be assumed to differ only in the set $N_B=(\partial B)\cap (V\setminus B)$. Notice that $N_B$ has at most $(\ell_{k+d-1}+1)^{d-1}$ elements, and that $$d(N_B,\D)\geq 
d(V\setminus B,V\setminus A)
\geq \tfrac14\ell_k,$$ by Lemma \ref{lem:geo}(2). 
Therefore, if $\eta(0)=\eta,\dots,\eta(m)=\eta'$, denotes a sequence of configurations interpolating between $\eta$ and $\eta'$, such that, for all $j\in\{0,\dots,m-1\}$,  $\eta(j)$ and $\eta(j+1)$ differ only at one site $x_j\in N_B$, with $m\leq (\ell_{k+d-1}+1)^{d-1}$, we have
\begin{align}\label{ABlemma4}
\frac{\psi_{B,\D}^{\eta}}{\psi_{B,\D}^{\eta'}}
= \prod_{j=1}^m 
\frac{\psi_{B,\D}^{\eta(j-1)}}{\psi_{B,\D}^{\eta(j)}}.
\end{align}
The definition of $SM(K,a)$ implies that 
\begin{align}\label{ABlemma14}
\left\|\frac{\psi_{B,\D}^{\eta(j-1)}}{\psi_{B,\D}^{\eta(j)}} -1 \right\|_\infty 
\leq \e_0:=Ke^{-a \ell_{k}/4}.
\end{align}
Expanding the products in \eqref{ABlemma3}, and assuming $m\e_0\leq 1$, we obtain
\begin{align}\label{ABlemma13}
\e\leq \sum_{\ell=1}^m\binom{m}{\ell}\e_0^\ell = (1+\e_0)^m-1\leq em\e_0,
\end{align}
where we use the inequality $(1+x)^m\leq 1+emx$ for $x>0$ and $m>0$ such that $mx\leq 1$. 
Thus, if $k\geq k_0$ for some constant $k_0$ depending only on $K,a,d$, we have obtained 
\eqref{ABlemma2} with $\e=K'\ell_{k}^{d-1}e^{-a \ell_{k}/4}$, where $K'=(3/2)^{d-1}eK\leq 5^dK$.
\end{proof}

\begin{lemma}\label{lem:anyD}
Let $V$, $B$ and $A_i$, $i=1,\dots r$, be as in Lemma \ref{lem:geo}. Then
\begin{align}\label{a1D1}
\sum_{i=1}^{r}\,
&\mu\left[\ent_{EB}\mu_{A_i} f -\ent_{EB}\mu_{EA_i} f \right]\leq \ent f\,,\\
\sum_{i=1}^{r}
\,&
\mu\left[\ent_{OB}\mu_{A_i} f -\ent_{OB}\mu_{OA_i} f \right]\leq \ent f.
\end{align}
\end{lemma}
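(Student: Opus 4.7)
Write $g_i := \mu_{A_i} f$ and $\tilde g_i := \mu_{EA_i} f$; I will sketch the $EB$ bound (the $OB$ bound is parity-symmetric).

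The first step is to convert the local entropies $\ent_{EB}$ into global entropies via Lemma~\ref{lem:telescope}, using the identity $\mu[\ent_{\L}(h)] = \ent(h) - \ent(\mu_\L h)$, which yields
\begin{align}
\mu\bigl[\ent_{EB}(g_i) - \ent_{EB}(\tilde g_i)\bigr]
= \bigl(\ent g_i - \ent\tilde g_i\bigr) + \bigl(\ent\mu_{EB}\tilde g_i - \ent\mu_{EB} g_i\bigr).
\end{align}
Since $EA_i\subset A_i$ and $\mu_{A_i}\tilde g_i = g_i$, a second application of Lemma~\ref{lem:telescope} gives $\ent g_i-\ent\tilde g_i = -\mu[\ent_{A_i}(\tilde g_i)]\leq 0$.

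For the second parenthesis, the product structure of $\mu_E$ makes $\mu_{EB}$ and $\mu_{EA_i}$ commute, so $\mu_{EB}\tilde g_i = \mu_{EB\cup EA_i} f$. A crucial geometric observation coming from the construction of $Q$, $R_i$, and the bound $r\leq\lfloor\ell_{k+d}/6\rfloor$ is that every increment $EA_{i+1}\setminus EA_i$ is contained in $EB$, so $EB\cup EA_i$ does not depend on $i$; call this common set $\Lambda$ and set $H:=\mu_\Lambda f$. To lower-bound $\ent\mu_{EB}g_i$, I would apply $\mu_{A_i\cup EB}$ to it: by DLR ($EB\subset A_i\cup EB$ and $A_i\subset A_i\cup EB$) this equals $\mu_{A_i\cup EB} f$, and Jensen yields $\ent\mu_{A_i\cup EB}f\leq\ent\mu_{EB} g_i$. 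Combining with Lemma~\ref{lem:telescope} on the chain $\Lambda\subset A_i\cup EB$, whose increment is exactly $OA_i$, gives
\begin{align}
\ent\mu_{EB}\tilde g_i - \ent\mu_{EB} g_i \leq \ent H - \ent\mu_{A_i\cup EB}f = \mu[\ent_{OA_i}(H)].
\end{align}
Thus each term of the sum satisfies
\begin{align}
\mu[\ent_{EB}(g_i)-\ent_{EB}(\tilde g_i)] \leq \mu[\ent_{OA_i}(H)] - \mu[\ent_{A_i}(\tilde g_i)].
\end{align}

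Finally, to close the argument I would sum over $i=1,\dots,r$ and exploit the nested pairing structure $OA_{2j-1}=OA_{2j}\subset OA_{2j+1}$, together with Lemma~\ref{lem:telescope} applied to the chain $\{OA_{2j-1}\}_j$, to convert $\sum_i\mu[\ent_{OA_i}(H)]$ into a telescoping expression; the negative contribution $-\sum_i\mu[\ent_{A_i}(\tilde g_i)]$ is used to absorb the excess, leaving a single entropy bounded by $\ent H\leq\ent f$.

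The hardest step is precisely this last one: the naive bound $\mu[\ent_{OA_i}(H)]\leq\ent H$ would only give $r\cdot\ent f$, so one has to combine the nesting of the $OA_i$ with a Jensen-type comparison between $\ent_{OA_i}(H)$ and $\ent_{A_i}(\tilde g_i)$ (exploiting that $H$ is a $\mu_{EB}$-average of $\tilde g_i$ and that $OA_i\subset A_i$) in order to realize the desired cancellation and recover the factor $\ent f$ without the spurious $r$.
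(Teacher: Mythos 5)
Your reduction of the local entropies to global ones is correct as far as it goes: the identity $\mu[\ent_{EB}h]=\ent h-\ent\,\mu_{EB}h$ from Lemma \ref{lem:telescope}, the observation that $\mu_{EB}\mu_{EA_i}f=\mu_{EA_i\cup EB}f=\mu_E f$ (since $V=A_i\cup B$ forces $EA_i\cup EB=E$, so your $H$ is just $\mu_E f$), and the bound $\ent g_i-\ent\tilde g_i=-\mu[\ent_{A_i}\tilde g_i]\le 0$ are all fine. But there is already an error in the next step: by Lemma \ref{lem:telescope}, $\ent H-\ent\,\mu_{A_i\cup EB}f=\mu[\ent_{A_i\cup EB}H]$, which is $\ge\mu[\ent_{OA_i}H]$ rather than equal to it (the discrepancy is $\mu[\ent_{A_i\cup EB}\mu_{OA_i}H]\ge 0$). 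Silently replacing the correct quantity by the smaller $\mu[\ent_{OA_i}H]$ goes in the wrong direction for an upper bound.

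More seriously, the final summation --- which is the entire content of the lemma --- is not actually carried out. The sum $\sum_{i=1}^r\mu[\ent_{OA_i}H]$ with $H$ \emph{fixed} does not telescope: Lemma \ref{lem:telescope} collapses $\sum_i\mu[\ent_{\L_i}\mu_{\L_{i-1}}f]$ into a single entropy only because the $i$-th term is evaluated at the function conditioned on $\L_{i-1}$; here every term is evaluated at the same $H$, the sets $OA_i$ are increasing, and each term can individually be of order $\ent f$, so the naive bound is $r\,\ent f$. You acknowledge this and propose that the negative terms $-\mu[\ent_{A_i}\tilde g_i]$ "absorb the excess," but you give no mechanism for that cancellation, and I do not see one; this is exactly the obstacle the lemma must overcome. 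The paper's proof produces the telescoping structure by a different route: it splits according to the parity of $i$, shows the $i$-th term is $\le 0$ for $i$ even using the conditional independence of $A_i$ and $V\setminus A_{i+1}$ given $\si_{\,\G_{i+1}}$ (Lemma \ref{lem:geo}(4)) together with the variational principle \eqref{varprin}, and for $i$ odd bounds the term by $\mu[\ent_{A_{i+1}}\mu_{A_i}f]$ --- note the conditioning on $A_i$ inside the entropy --- so that the sum genuinely telescopes to $\mu[\ent_{A_{r+1}}\mu_{A_1}f]\le\ent f$. Your argument never invokes Lemma \ref{lem:geo}(4) or the parity structure of the blocks $A_i$, which is the essential geometric input.
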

\begin{proof}
We prove the first inequality. The same argument proves the second one, with the role of even and odd sites exchanged.
Fix $i\in\{1,\dots,r\}$. Notice that $\mu_{A_i} f = \mu_{A_i}\mu_{EA_i}f$. Let us first observe that if $i$ is even then
\begin{align}\label{a1D2}
\mu\left[\ent_{EB}\mu_{A_i} f  -\ent_{EB}\mu_{EA_i} f \right]\leq 0.
\end{align}
Indeed, in this case $i+1$ is odd and Lemma \ref{lem:geo}(4) implies 
\begin{align}\label{a1D4}
\mu_{EB}\mu_{A_i} f = \mu_{EB}\mu_{A_i} \mu_{EA_i} f
=\mu_{A_i} \mu_{EB}\mu_{EA_i} f.
\end{align}
Therefore,
\begin{align}\label{a1D3}
\mu\left[\ent_{EB}\mu_{A_i} f \right]&= \mu\left[\mu_{A_i} f \log\left(\mu_{A_i} f/\mu_{EB}\mu_{A_i} f\right)\right]\\ &=
\mu\left[\mu_{A_i} \mu_{EA_i} f \log\left(\mu_{A_i} \mu_{EA_i} f/\mu_{A_i}\mu_{EB} \mu_{EA_i} f\right)\right]
\\ &=
\mu\left[\mu_{EA_i} f \log\left(\mu_{A_i} \mu_{EA_i} f/\mu_{A_i} \mu_{EB}\mu_{EA_i} f\right)\right]
\\ &=
\mu\left[\mu_{EB}\left(\mu_{EA_i} f \log\left(\mu_{A_i} \mu_{EA_i} f/\mu_{EB}\mu_{A_i} \mu_{EA_i} f\right)\right)\right]
\\ &\leq
\mu\left[\mu_{EB}\left(\mu_{EA_i} f \log\left( \mu_{EA_i} f/\mu_{EB} \mu_{EA_i} f\right)\right)\right]
\\ &= \mu\left[\ent_{EB}\mu_{EA_i} f \right],
\end{align}
where the inequality follows from the variational principle \eqref{varprin}. This settles the case when $i$ is even. 

Next, suppose that $i$ is odd. 
Here the commutation relation  \eqref{a1D4} does not hold, since the average $\mu_{A_i}$ depends on the spins in the even sites $\G_{i+1}\subset B\setminus A_i$. Moreover, \eqref{a1D2} is in general false  since  if e.g.\ $f$ depends only on $\si_{\,\G_i}$, then $\ent_{EB}\mu_{EA_i} f=0$ while one can have $\ent_{EB}\mu_{A_i} f>0$. 

Define $g=\mu_{EA_i}f$.  From the decomposition in Lemma \ref{lem:telescope} we see that
\begin{align}\label{a1D31a}
\ent_{EB} (\mu_{EA_i} f) & = \ent_{E} (g) \\& 
= \ent_{E} \left(\mu_E(g|\si_{i+1})\right) + \mu_{E}\left[\ent_E\left(g|\si_{i+1}\right)\right],
\end{align}
where we use the shorthand notation $\si_{i+1}$ for $\si_{\,\G_{i+1}}$,  $\ent_E\left(g|\si_{i+1}\right)$ denotes the entropy of $g$ with respect to the conditional measure $\mu_E(\cdot|\si_{i+1})=\mu_{E\setminus\G_{i+1}}$. 
Since $\mu_E$ is a product measure, 
 \begin{align}\label{a1D31a2}
\ent_{E} \left(\mu_E(g|\si_{i+1})\right)
=\ent_{\, i+1}\left(\mu_E(g|\si_{i+1})\right),
\end{align}
where $\ent_{\, i+1}=\ent_{\G_{i+1}}$ denotes the entropy with respect to the probability measure $\mu_{\G_{i+1}}$.  Similarly,
\begin{align}\label{a1D301}
\ent_{EB} (\mu_{A_i} f) & = \ent_{E} (\mu_{A_i}g) \\& 
= \ent_{\, i+1} \left(\mu_E(\mu_{A_i} g|\si_{i+1})\right) + \mu_{E}\left[\ent_E\left(\mu_{A_i} g|\si_{i+1}\right)\right].
\end{align}
Let us show that 
\begin{align}\label{a1D302}
 \mu\left[\ent_E\left(\mu_{A_i} g|\si_{i+1}\right)\right]\leq
  \mu\left[\ent_E\left(g|\si_{i+1}\right)\right].
  \end{align}
Indeed, Lemma \ref{lem:geo}(4) implies that $$\mu_{E}(\mu_{A_i} g | \si_{i+1})  = \mu_{E(V\setminus A_{i+1})} \mu_{A_i}g=  \mu_{A_i}\mu_{E(V\setminus A_{i+1})} g= \mu_{A_i}\mu_{E}(g | \si_{i+1}),$$ 
where $E(V\setminus A_{i+1})$ are the even sites in $V\setminus A_{i+1}$, and we have used the fact that $A_i$ and $E(V\setminus A_{i+1})$ are conditionally independent given the spins $\si_{i+1}$.  
Therefore, reasoning as in \eqref{a1D3}: 
\begin{align}\label{a1D300}
\mu\left[\ent_E\left(\mu_{A_i} g|\si_{i+1}\right)\right]&= 
\mu\left[\mu_{A_i} g \log\left(\mu_{A_i} g/\mu_{E}(\mu_{A_i} g | \si_{i+1})\right)\right]
\\ &=
\mu\left[\mu_{A_i} g \log\left(\mu_{A_i} g/\mu_{A_i}\mu_{E}( g | \si_{i+1})\right)\right]
\\ &=
\mu\left[g \log\left(\mu_{A_i} g/\mu_{E}(\mu_{A_i} g | \si_{i+1})\right)\right]
\\ &\leq
\mu\left[g \log\left(g/\mu_{E}(g | \si_{i+1})\right)\right] \\& =  \mu\left[\ent_E\left(g|\si_{i+1}\right)\right].
\end{align}
From \eqref{a1D31a}-\eqref{a1D31a2}-\eqref{a1D301}-\eqref{a1D302}
we conclude that, when $i$ is odd:
\begin{align}\label{a1D36}
&\mu\left[\ent_{EB}\mu_{A_i} f -\ent_{EB}\mu_{EA_i} f \right]\\
&\qquad \leq 
\mu[\ent_{\, i+1} \left(\mu_E(\mu_{A_i} g|\si_{i+1})\right)]- \mu[\ent_{\, i+1} \left(\mu_E(g|\si_{i+1})\right)].
\end{align}
%
%
As in \eqref{a1D302}, we may write 
\begin{align*}
\mu_E(\mu_{A_i} g|\si_{i+1})
&= 
\mu_E(\mu_{A_i} f| \si_{i+1}) 
= \mu_{E(V\setminus A_{i+1})} \mu_{A_i}f\,.
\end{align*}
Therefore
\begin{align}\label{a1D38}
\mu[\ent_{\, i+1} \left(\mu_E(\mu_{A_i} g|\si_{i+1})\right)]
&=
\mu\left[\ent_{\, i+1} \left(\mu_{E(V\setminus A_{i+1})} \mu_{A_i}f\right)\right]
\\&
\leq\mu\left[\mu_{E(V\setminus A_{i+1})}\ent_{\, i+1} \mu_{A_i}f\right]\\&=
\mu\left[\ent_{\, i+1} \mu_{A_i}f\right],
\\&
\leq \mu\left[\ent_{A_{i+1}} \mu_{A_i}f\right],
\end{align}
where the first inequality follows from convexity of entropy and the second from the monotonicity of $A\mapsto \mu[\ent_A f]$. Neglecting the last term in \eqref{a1D36}, 
we have arrived at 
\begin{align}\label{a1D360}
\mu\left[\ent_{EB}\mu_{A_i} f -\ent_{EB}\mu_{EA_i} f \right]
 \leq  \mu\left[\ent_{A_{i+1}} \mu_{A_i}f\right],
\end{align}
for all $i$ odd. In view of the estimate \eqref{a1D2} we may use the bound \eqref{a1D360} for all $i$.
Therefore, an application of Lemma \ref{lem:telescope}
shows that 
 \begin{align}\label{a1D41}
\sum_{i=1}^{r}
\mu\left[\ent_{EB}\mu_{A_i} f -\ent_{EB}\mu_{EA_i} f \right]&
\leq  \sum_{i=1}^{r}\mu\left[\ent_{A_{i+1}} \mu_{A_i}f\right]\\&
= \mu\left[\ent_{A_{r+1}} \mu_{A_1}f\right]
\leq \ent f. \end{align}
\end{proof}

We are now able to conclude the proof of Theorem \ref{th:maind}. 
To prove the recursive bound \eqref{EOth2} we suppose $V\in\bbF_k\setminus \bbF_{k-1}$. 
Then, by translation invariance and by the invariance under coordinate permutation, we may assume that $V$ is as in Lemma \ref{lem:geo}.
Combining Lemma \ref{lem:entBA} with Lemma \ref{lem:ABlemmad} we obtain, for each $i=1,\dots,
r$,
 \begin{align}\label{EOth20}
(1-\theta(\e_k))\,\ent f\leq \mu\left[\ent_{A_i}f + \ent_B \mu_{A_i}f\right].
\end{align}
Since $A_i,B\in\bbF_{k-1}$, by definition of $\d(k)$ we obtain
 \begin{align}\label{EOth21}
&(1-\theta(\e_k))\d(k-1)\,\ent f\\&\qquad \leq \mu\left[\ent_{EA_i}f + \ent_{EB} \mu_{A_i}f + \ent_{OA_i}f + \ent_{OB} \mu_{A_i}f\right].
\end{align}
From Lemma \ref{EOlemma1} we find that the right hand side of \eqref{EOth21} equals
\begin{align}\label{EOth22}
&\mu\left[\ent_{E}f + \ent_{O} f\right] + \\&\qquad + \mu\left[\ent_{EB}\mu_{A_i}f - \ent_{EB}\mu_{EA_i}f \right]
+   \mu\left[\ent_{OB}\mu_{A_i}f - \ent_{OB}\mu_{OA_i}f \right].
\end{align}
Averaging over $i$ in \eqref{EOth22} and using Lemma \ref{lem:anyD}, 
 \begin{align}\label{EOth210}
&(1-\theta(\e_k))\d(k-1)\,\ent f
\leq \mu\left[\ent_{E}f + \ent_{O}f\right] + \frac2r \,\ent f.
\end{align}
In conclusion, $\d(k)\geq (1-\theta(\e_k))\d(k-1) - \frac2r$, or equivalently $$\d(k)\geq \left(1-\theta(\e_k) - \frac2{r\d(k-1)}\right)\d(k-1).$$
Since $r\sim \frac14\ell_k$ and $\d(k-1)\leq 1$, it follows that  $\frac1{r\d(k-1)}\gg \theta(\e_k)$ for all $k$
large enough, and therefore $$\d(k)\geq \left(1- \frac{10}{\d(k-1)\ell_k}\right)\d(k-1),$$ for all $k\geq k_0(K,a,d)$.

\subsection{Proof of Theorem \ref{th:toy}}
Here we shall use again a recursion on an exponential scale. However, this time we divide the set $V$ into two sets $A=\cup_i A_i$, $B=\cup_i B_i$ each being the union of a large number of well separated subsets. 
We use the factorization from Lemma \ref{lem:entBA} to reduce the problem in the set $V$ to the problem in either $A$ or $B$. Then we use the Lemma \ref{lem:parislemma} to tensorize within $A$ and within $B$, which allows us to reduce the problem to a single region $A_i$ or $B_i$ only.  

Fix a large integer $b>1$, define $u_k = b^{k/d}$, and call $\bbG_k$ the set of all subsets $V\subset \bbZ^d$ which up to translations and permutation of the coordinates are included in the rectangle $[0,u_{k+1}]\times\cdots\times[0,u_{k+d}]$.
We partition the interval $I=[0,u_{k+d}]$  into $2b$ consecutive non-overlapping intervals $I_1,\dots,I_{2b}$ such that $I_j$ have length $t_k:=\frac1{2b}u_{k+d}$, that is 
$$I_j=[(j-1)t_k,j t_k], \qquad j=1,\dots,2b.$$  
 Define also the enlarged intervals $\bar I_j = \{s\in I: d(s,I_j)\leq t_k/4\}$, and consider the collections of intervals
$$
\D_A= \bigcup_{j=1}^{2b} \bar I_{j}\,\ind_{j \text{ odd}}, \qquad \D_B= \bigcup_{j=1}^{2b} \bar I_{j}\,\ind_{j \text{ even}}. 
$$
We remark that both $\D_A$ and $\D_B $ are collections of non-overlapping intervals, with  $$d(\bar I_{2j-1},\bar I_{2i-1})\geq\frac12\, t_k\,,\qquad d(\bar I_{2j},\bar I_{2i})\geq\frac12\, t_k$$ for all $i\neq j$.
On the other hand, $\D_A\cap\D_B\neq \emptyset$.  We define the rectangular sets in $\bbR^d$:
 \begin{align}\label{geo10}
& Q_i:=[0,u_{k+1}]\times\dots\times [0,u_{k+d-1}]\times \bar I_j\,,\qquad j=1,\dots,2b,
\end{align}
and define the $\bbZ^d$ subsets
\begin{align}\label{geo20}
&A_i:=Q_{2i-1}\cap V\,,\quad B_i=Q_{2i}\cap V\,,\qquad i=1,\dots,b.\\&
A=\bigcup_{i=1}^bA_i\,,\qquad B=\bigcup_{i=1}^bB_i\,.
\end{align}
We refer to Figure \ref{fig2} for a two-dimensional representation. 
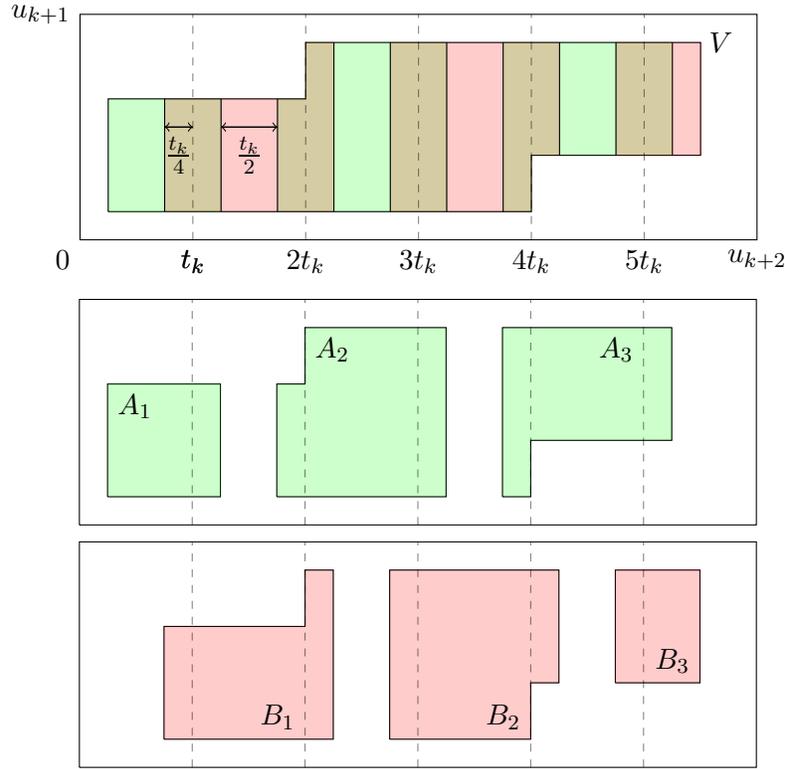
\begin{figure}[htp]
\center
\begin{subfigure}

\begin{tikzpicture}[scale=1.5]
\draw (0,0)--(6,0)--(6,2)--(0,2)--(0,0);
\draw (1/4,1/4)--(4,1/4)--(4,3/4)--(22/4,3/4)--(22/4,7/4)--(8/4,7/4)--(8/4,5/4)--(1/4,5/4)--(1/4,1/4);
\node [below left] {0};\node at (1,0) [below] {$t_{k}$};\node at (2,0) [below] {$2t_{k}$};\node at (3,0) [below] {$3t_{k}$};\node at (4,0) [below] {$4t_{k}$};\node at (5,0) [below] {$5t_{k}$};\node at (1,0) [below] {$t_{k}$};\node at (6,0) [below] {$u_{k+2}$};\node [left] at (0,2){$u_{k+1}$};
\draw [fill=green, opacity=0.2] (1/4,1/4)--(1/4,5/4)--(5/4,5/4)--(5/4,1/4)--(1/4,1/4);\draw [fill=green, opacity=0.2] (7/4,1/4)--(13/4,1/4)--(13/4,7/4)--(8/4,7/4)--(8/4,5/4)--(7/4,5/4)--(7/4,1/4);\draw [fill=green, opacity=0.2] (15/4,1/4)--(16/4,1/4)--(16/4,3/4)--(21/4,3/4)--(21/4,7/4)--(15/4,7/4)--(15/4,1/4);
\draw [fill=red, opacity=0.2] (3/4,1/4)--(9/4,1/4)--(9/4,7/4)--(8/4,7/4)--(8/4,5/4)--(3/4,5/4)--(3/4,1/4);\draw [fill=red, opacity=0.2] (11/4,1/4)--(16/4,1/4)--(16/4,3/4)--(17/4,3/4)--(17/4,7/4)--(11/4,7/4)--(11/4,1/4);\draw [fill=red, opacity=0.2] (19/4,3/4)--(22/4,3/4)--(22/4,7/4)--(19/4,7/4)--(19/4,3/4);
\draw [->] (3/4,1) -- (1,1);\draw [<-] (3/4,1)--(1,1);\node [below] at (7/8,1) {$\frac{t_{k}}{4}$};
\draw [->] (5/4,1) -- (7/4,1);\draw [<-] (5/4,1)--(7/4,1);\node [below] at (6/4,1) {$\frac{t_{k}}{2}$};
\draw[dashed,opacity=0.5](1,0)--(1,2);\draw(3/4,1/4)--(3/4,5/4);\draw(5/4,1/4)--(5/4,5/4);
\draw[dashed,opacity=0.5](2,0)--(2,2);\draw(7/4,1/4)--(7/4,5/4);\draw(9/4,1/4)--(9/4,7/4);
\draw[dashed,opacity=0.5](3,0)--(3,2);\draw(11/4,1/4)--(11/4,7/4);\draw(13/4,1/4)--(13/4,7/4);
\draw[dashed,opacity=0.5](4,0)--(4,2);\draw(15/4,1/4)--(15/4,7/4);\draw(17/4,3/4)--(17/4,7/4);
\draw[dashed,opacity=0.5](5,0)--(5,2);\draw(19/4,3/4)--(19/4,7/4);\draw(21/4,3/4)--(21/4,7/4);
\node [left] at (47/8,7/4) {$V$};
\end{tikzpicture}
\end{subfigure}

\begin{subfigure}

\hspace*{1em}
\begin{tikzpicture}[scale=1.5]
\draw (0,0)--(6,0)--(6,2)--(0,2)--(0,0);
\draw (1/4,1/4)--(5/4,1/4)--(5/4,5/4)--(1/4,5/4)--(1/4,1/4);
\draw (7/4,1/4)--(13/4,1/4)--(13/4,7/4)--(8/4,7/4)--(8/4,5/4)--(7/4,5/4)--(7/4,1/4);
\draw (15/4,1/4)--(16/4,1/4)--(16/4,3/4)--(21/4,3/4)--(21/4,7/4)--(15/4,7/4)--(15/4,1/4);
\draw [fill=green, opacity=0.2] (1/4,1/4)--(1/4,5/4)--(5/4,5/4)--(5/4,1/4)--(1/4,1/4);\draw [fill=green, opacity=0.2] (7/4,1/4)--(13/4,1/4)--(13/4,7/4)--(8/4,7/4)--(8/4,5/4)--(7/4,5/4)--(7/4,1/4);\draw [fill=green, opacity=0.2] (15/4,1/4)--(16/4,1/4)--(16/4,3/4)--(21/4,3/4)--(21/4,7/4)--(15/4,7/4)--(15/4,1/4);
\draw[dashed,opacity=0.5](1,0)--(1,2);
\draw[dashed,opacity=0.5](2,0)--(2,2);
\draw[dashed,opacity=0.5](3,0)--(3,2);
\draw[dashed,opacity=0.5](4,0)--(4,2);
\draw[dashed,opacity=0.5](5,0)--(5,2);
\node [below right] at (1/4,5/4) {$A_{1}$};
\node [below right] at (8/4,7/4) {$A_{2}$};
\node [below left] at (20/4,7/4) {$A_{3}$};
\end{tikzpicture}
\end{subfigure}

\begin{subfigure}

\hspace*{1em}
\begin{tikzpicture}[scale=1.5]
\draw (0,0)--(6,0)--(6,2)--(0,2)--(0,0);
\draw (3/4,1/4)--(9/4,1/4)--(9/4,7/4)--(8/4,7/4)--(8/4,5/4)--(3/4,5/4)--(3/4,1/4);\draw (11/4,1/4)--(16/4,1/4)--(16/4,3/4)--(17/4,3/4)--(17/4,7/4)--(11/4,7/4)--(11/4,1/4);\draw (19/4,3/4)--(22/4,3/4)--(22/4,7/4)--(19/4,7/4)--(19/4,3/4);
\draw [fill=red, opacity=0.2] (3/4,1/4)--(9/4,1/4)--(9/4,7/4)--(8/4,7/4)--(8/4,5/4)--(3/4,5/4)--(3/4,1/4);\draw [fill=red, opacity=0.2] (11/4,1/4)--(16/4,1/4)--(16/4,3/4)--(17/4,3/4)--(17/4,7/4)--(11/4,7/4)--(11/4,1/4);\draw [fill=red, opacity=0.2] (19/4,3/4)--(22/4,3/4)--(22/4,7/4)--(19/4,7/4)--(19/4,3/4);
\draw[dashed,opacity=0.5](1,0)--(1,2);
\draw[dashed,opacity=0.5](2,0)--(2,2);
\draw[dashed,opacity=0.5](3,0)--(3,2);
\draw[dashed,opacity=0.5](4,0)--(4,2);
\draw[dashed,opacity=0.5](5,0)--(5,2);
\node [above left] at (8/4,1/4) {$B_{1}$};
\node [above left] at (16/4,1/4) {$B_{2}$};
\node [above left] at (22/4,3/4) {$B_{3}$};
\end{tikzpicture}
\end{subfigure}

\caption{\footnotesize{An example of $A=\bigcup_{i}A_{i}$ (green blocks) and $B=\bigcup_{i}B_{i}$ (red blocks) for a given region $V$ in the rectangle $[0,u_{k+1}]\times[0,u_{k+2}].$}}
\label{fig2}

\end{figure}

We observe that $A_i \in \bbG_{k-1}$ and $B_i \in \bbG_{k-1}$ for all $i=1,\dots,b$. Indeed, the stretch of $A_i$ along the $d$-th coordinate is at most $t_k + 2t_k/4 \leq 2t_k\leq u_k$ which together with $u_{k,i}=u_{k-1,i+1}$, $i=1,\dots,d-1$, implies that $A_i \in \bbG_{k-1}$. The same applies to $B_i$. 
Observe that with these definitions one has the product property 
\begin{align}\label{geo21}
\mu_A=\otimes_{i=1}^b\mu_{A_i}\,,\qquad
\mu_B=\otimes_{i=1}^b\mu_{B_i}  .
\end{align}
Moreover, the geometric construction shows that 
\begin{align}\label{geo22}
d(V\setminus A,V\setminus B)\geq \frac12 t_k.
\end{align}
Thus, a repetition of the argument in Lemma \ref{lem:ABlemmad} shows that 
the assumption of Lemma \ref{lem:entBA} is satisfied with $\e$ given by 
$$\e_k = 
O\left(u_{k}^d\, e^{-a u_k/2}\right).$$
Therefore, by Lemma \ref{lem:entBA},
\begin{align}\label{geo23}
\ent f\leq \mu\left[\ent_A f+ \ent_B f\right] + \theta(\e_k)\,\ent f.
\end{align}
Next, let $\r(k)$ be defined as the largest constant $\r > 0$ such that the inequality  
 \begin{equation}\label{EOth100}
\r\,\ent_V^\t f \leq \mu_V^\t\left[\ent_E f + \ent_O f\right]
 \end{equation}
holds for all $V\in\bbG_k$, $\t\in\O_{V^c}$, and all $f\geq 0$. 
The key observation is that thanks to the product property \eqref{geo21}, and using the fact that $A_i \in \bbG_{k-1}$ for all $i$, 
Lemma \ref{lem:parislemma} allows us to estimate
\begin{align}\label{geo24}
\r(k-1)\mu\left[\ent_A f\right]\leq \mu\left[\ent_{EA} f+ \ent_{OA} f\right].
\end{align}
Similarly,
\begin{align}\label{geo25}
\r(k-1)\mu\left[\ent_B f\right]\leq \mu\left[\ent_{EB} f+ \ent_{OB} f\right].
\end{align}
Thus, \eqref{geo23} implies 
\begin{align}\label{geo26}
\r(k-1)(1-\theta(\e_k))\ent f&\leq  \mu\left[\ent_{EA} f+ \ent_{OA} f\right] + \mu\left[\ent_{EB} f+ \ent_{OB} f\right]\\&
\leq 2 \mu\left[\ent_{E} f+ \ent_{O} f\right],
\end{align}
where we use the monotonicity of  $\L\mapsto \mu\left[\ent_{\L} f\right]$.
Estimating $1-\theta(\e_k)\geq 1/2$ we have proved that 
\begin{align}\label{geo27}
\r(k)\geq \frac14\r(k-1).
\end{align}
Iterating, we conclude $\r(k)\geq 4^{-k}\r(k_0)$. To finish the proof, observe that $(3/2)^k=b^{k\e}$ where $\e=\log(3/2)/\log (b)$, which can be made small by taking $b$ large. Therefore,
\begin{align}\label{geo270}
\d(k)\geq \r(\lfloor k\e\rfloor +1)\geq 4^{-k\e -1}\r(k_0)\geq c_04^{-k\e} = c_0 \ell_k^{-\e'},
\end{align}
where $c_0$ is a constant depending on $K,a,d,b$, while $\e'=d\log(4)/\log(b)$ can be as small as we wish provided $b$ is suitably large. 
This ends the proof of Theorem \ref{th:toy}.

\begin{remark}\label{rem:toy}
We point out that the argument given in the proof of Theorem \ref{th:toy} can be improved if one replaces the parameter $t_k$ which is linear in $u_k$ by $t'_k=C_1\log (u_k)$, with $C_1$ a suitably large constant. Since $t'_k$ is logarithmic in $u_k$, one can modify  the recursion to obtain a bound of the form $\d(k)\geq   \d(C_2\log(k))/C_2$ for some new constant $C_2$, which provides a much better lower bound on $\d(k)$ than the one stated in Theorem \ref{th:toy}. 
However, without the companion recursive estimate from Theorem \ref{th:maind}, this argument alone would not provide the uniform estimate $\inf_k\d(k)>0$.  
\end{remark}

\subsection{Proof of Theorem \ref{th:main} assuming $SM_L(K,a)$}
Theorem \ref{th:maind} and Theorem \ref{th:toy} allowed us to establish Theorem \ref{th:main} under the assumption $SM(K,a)$. We now prove it assuming only $SM_L(K,a)$. To this end we observe that any set $V\in\bbF^{(L)}$ is uniquely identified by the set $V'\in\bbF$ such that 
\begin{align}\label{relax0}
V = \bigcup_{y\in V'} Q_L(y).
\end{align} 
A careful check of the previous proofs then shows that if we work on the rescaled lattice, that is we replace vertices $x$ with blocks $Q_L(x)$, then we may repeat all steps in  Theorem \ref{th:maind} and Theorem \ref{th:toy} to obtain the following coarse-grained version of Theorem \ref{th:main} assuming  only $SM_L(K,a)$: for any $V\in \bbF^{(L)}$, for all $f\geq 0$,
\begin{align}\label{relax1}
\ent f \leq C\,\mu\left[\ent_{E_L}f + \ent_{O_L} f\right],
\end{align}
where, if $V$ is given by \eqref{relax0}, then $E_L=\cup_{x\in EV'}Q_L(x)$, and $ O_L=\cup_{x\in OV'}Q_L(x)$. 

Consider now a single cube $Q_L(x)$. By Lemma \ref{lem:base} we know that 
\begin{align}\label{relax2}
\ent_{Q_L(x)} f \leq C_1 \,\mu_{Q_L(x)}\left[\ent_{EQ_{L(x)}}f + \ent_{OQ_L(x)} f\right],
\end{align}
for some constant $C_1=C_1(L)$. 
Observe that by construction $d(Q_L(x),Q_L(y))>1$ for all $x,y\in EV'$. Similarly, $d(Q_L(x),Q_L(y))>1$ for all $x,y\in OV'$. Therefore, Lemma \ref{lem:parislemma} implies 
\begin{align}\label{relax3}
\ent_{E_L}f &\leq C_1 \,\mu_{E_L}\left[\ent_{EE_L}f + \ent_{OE_L} f\right]\\
\ent_{O_L}f &\leq C_1 \,\mu_{O_L}\left[\ent_{EO_L}f + \ent_{OE_L} f\right],
\end{align}
where $EE_L$ denotes the even sites in $E_L$, $EO_L$ the even sites in $O_L$, and so on.
Plugging these estimates in \eqref{relax1} and using the monotonicity of $A\mapsto\mu[\ent_A f]$ one arrives at  
\begin{align}\label{relax4}
\ent f \leq D\,\mu\left[\ent_{E}f + \ent_{O} f\right],
\end{align}
with $D=2C\times C_1$. This ends the proof of Theorem \ref{th:main}. 
\bibliographystyle{plain}
\bibliography{bibentropy}

 \end{document}